\title{A random walk on $p$-groups with a symmetric perfect pairing}

\documentclass{article}

\usepackage{ifxetex,ifluatex}
\if\ifxetex T\else\ifluatex T\else F\fi\fi T%
  \usepackage{fontspec}
\else
  \usepackage[T1]{fontenc}

  \usepackage{blkarray, bigstrut}
  \usepackage[utf8]{inputenc}
  
  \usepackage{amsthm}
  \usepackage{thmtools}
  \usepackage{mathtools}

  \usepackage{lmodern}
  \usepackage{amssymb}
  \usepackage{amsfonts}
  \usepackage{tikz-cd}
  \usepackage{amsthm}
  \usepackage{xfrac}
  \usepackage{mathtools}
  \usepackage{multirow}
  \usepackage{mathrsfs}
  \usepackage{comment}

  \newtheorem*{corollary}{Corollary}

  \newcommand{\probP}{\text{I\kern-0.15em P}}
  \newcommand{\probE}{\text{I\kern-0.15em E}}
  
  \newcommand{\defeq}{\overset{\mathrm{def}}{=\joinrel=}}

  \numberwithin{equation}{section}

  \theoremstyle{remark}
  \newtheorem*{remark}{Remark}
  \theoremstyle{remark}

  \newtheorem*{observation}{Observation}
  \newtheorem*{definition}{Definition}

  \newcommand{\Q}{\mathbb{Q}}
  \newcommand{\Z}{\mathbb{Z}}

  \excludecomment{mysection}
  \excludecomment{mymysection}
  \excludecomment{maybeinclude}

    \newenvironment{mat}
    {
    \left[
\begin{array}}
{ \end{array}
\right]
}

\newcommand{\probM}{\mathcal{M}}

\newtheorem{Remark}{Remark}[section]

\usepackage{mdframed}

\theoremstyle{theorem}
\newtheorem{Claim}{Claim}

\newtheorem{theorem}{Theorem}
\newtheorem*{extratheorem}{Theorem}
\newtheorem{lemma}{Lemma}[theorem]
\newenvironment{ftheo*}
  {\begin{mdframed}\begin{theorem*}}
  {\end{theorem*}\end{mdframed}}

\newtheorem{Corollary}{Corollary}[theorem]

\newtheorem*{goal}{Goal}

  \usepackage{enumitem}
  \newtheorem*{recall}{Recall}

   \DeclareSymbolFont{bbold}{U}{bbold}{m}{n}
   \DeclareSymbolFontAlphabet{\mathbbold}{bbold}

   \theoremstyle{remark}

\newtheorem{question}{Question}

\fi

 \theoremstyle{definition}

 \newcommand{\comm}[1]{} 

 \usepackage{cite}
 \newcommand{\commm}[1]{}
 
 \newcommand{\chapter}{section}

  \usepackage{
  hyperref,
  cleveref,
  }

\author{Nikita Lvov} 
\begin{document} 
\maketitle

 \newcommand{\Elementg}{\mathbf{g}}
 \newcommand{\cccdot}{\hspace{0.15in}}
 \newcommand{\ccdot}{\,\cdot \,}
 \newcommand{\Gdual}{Hom \Big( ker(M_{n-1,n-1}), \Q_p/\Z_p \Big)}
 \newcommand{\VV}{V_{n-1}^{\perp}/V_{n-1}}
 \newcommand{\probMM}{SymHaar}
 \newcommand{\probMMM}{ASHaar}
 \newcommand{\Ker}{\widetilde{ker}}

\newcommand{\comma}{,}
\newcommand{\mcH}{\mathcal{H}}
\newcommand{\mcG}{\mathcal{G}}
\newcommand{\sdot}{\, \cdot \,}
\newcommand{\qtensor}{\otimes \Q_p}
\newcommand{\somespace}{\newline \newline \noindent}

\renewcommand{\mcG}{X_S}

\abstract{The kernel of a random symmetric p-adic matrix is a random abelian group, equipped with a symmetric pairing. If we consider not only the matrix but also its top-left corners, we get a process valued in isomorphism classes of abelian groups, equipped with such a pairing. We show that when the matrix is Haar random, this process is a Markov chain, generated by an operator that we explicitly describe. We will also prove that this operator is reversible with respect to a Cohen-Lenstra type measure. }
\section{Introduction}
The main purpose of this paper is to generalize some of the results of \cite{arxiv} to symmetric matrices. We first recall the case of $p$-adic matrices without symmetry.
\newline\newline \noindent
A $k \times k$ matrix $M$, with coefficients in $\Z_p$, gives rise to a homomorphism:
\begin{equation}
\label{eqn:homomorphism}
(\Q_p /\Z_p)^k \xrightarrow{M} (\Q_p / \Z_p)^k
\end{equation}
Up to automorphisms of $(\Q_p / \Z_p)^k$, this homomorphism is uniquely specified by the structure of its kernel, as an abelian group. This is a consequence of the existence of the Smith Normal Form. Thus, $ker(M)$ is a natural invariant.

\begin{question}
What is the distribution of $ker(\probM)$, when $\probM$ is a large random matrix?
\end{question}

A number of different distributions for $\probM$ can be considered, but there is one that is most natural. Indeed, a pleasant feature of $\Z_p$, as compared to $\Z$, is that $\Z_p$ is a pro-finite ring, and hence carries a natural probability measure. In the literature, this is alternatively called the \textit{uniform} measure, or the \textit{Haar} measure. For this measure, we have the following asymptotic theorem:

  \begin{extratheorem} \cite{FriedmanWashington}
  \label{thm:matricesandcl}
  If $\mathcal{M}_{k,k}$ is a random matrix whose entries are independent Haar distributed elements of $\Z_p$, $ker(\mathcal{M}_{k,k})$ is finite almost surely and:
  \begin{equation}
  \label{eqn:probg}
  \lim_{k \rightarrow \infty}\probP\Big( ker(\mathcal{M}_{k,k}) \cong G \Big) \propto \frac{1}{|Aut(G)|}
  \end{equation}
  \end{extratheorem}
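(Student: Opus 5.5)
To prove the Friedman--Washington theorem, I will compute, for a fixed finite abelian $p$-group $G$, the exact probability that $ker(\mathcal{M}_{k,k}) \cong G$, and then let $k \to \infty$.

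The first step is to translate the kernel into a cokernel. By Smith Normal Form, with probability one $\mathcal{M}_{k,k}$ is equivalent to $diag(p^{a_1},\dots,p^{a_k})$: the matrix is almost surely nonsingular, since $\det = 0$ is a measure-zero condition. Consequently $ker(\mathcal{M}_{k,k}) \cong \bigoplus \Z/p^{a_i} \cong coker(\mathcal{M}_{k,k} : \Z_p^k \to \Z_p^k)$. This shows that the kernel is finite almost surely.

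The second step is to reduce modulo a power of $p$. Choose $N$ with $p^N G = 0$, and take $N$ larger than the exponent of $G$. Whether $coker(M) \cong G$ depends only on $M \bmod p^{N+1}$, and $M \bmod p^{N+1}$ is uniform on $M_k(\Z/p^{N+1})$.

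For the main count I would use the standard surjection-counting approach. For each finite abelian $p$-group $H$, I will compute the moment
\[
\probE\,\#\mathrm{Sur}(coker(\mathcal{M}_{k,k}),H).
\]
A surjection $coker(M) \to H$ is the same as a surjection $\phi:\Z_p^k \to H$ with $\phi \circ M = 0$. For a fixed surjective $\phi$, the columns of $M$ are independent Haar vectors, so their images under $\phi$ are independent uniform elements of $H$. Hence $\probP(\phi M = 0) = |H|^{-k}$. Since $\#\mathrm{Sur}(\Z_p^k,H) = |H|^k \prod(\text{factor}\to 1)$, the moment tends to $1$ as $k \to \infty$.

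I will then check that the Cohen--Lenstra measure $\mu(G) \propto 1/|Aut(G)|$ also has all $H$-moments equal to $1$. This is the classical identity
\[
\sum_G \frac{\#\mathrm{Sur}(G,H)}{|Aut(G)|} = \sum_G \frac{1}{|Aut(G)|},
\]
which follows from counting pairs consisting of a group $G$ and a surjection onto $H$ with a given kernel, that is, from the corresponding sum over extensions.

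The main obstacle is the moment problem: I must show that these moments determine the limit distribution and that the convergence of moments implies convergence of the probabilities. The moments grow only like $p^{O(\log|H|)}$-type bounds, uniformly in $k$, and I would invoke a Wood-style uniqueness argument to close this step.

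Alternatively, and more elementarily, I would bypass the moment problem with a direct count via a filtration. I would row-reduce $M \bmod p$ to find that the corank $r$ of $M \bmod p$ has the limiting law $\propto p^{-r^2}\prod(1-p^{-i})^{-2}$. I would then recurse: conditioned on corank $r$, the cokernel is determined by an $r\times r$ Haar matrix scaled by $p$, and this recursion reproduces $1/|Aut(G)|$ via the standard formula for $|Aut(G)|$ in terms of its partition. Handling this recursion cleanly is where I expect the real bookkeeping to lie.
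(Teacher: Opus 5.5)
The paper gives no proof of this statement. It is quoted from Friedman and Washington as background, and the paper's own results are presented as a dynamical refinement of it: the corner process is a Markov chain with a Cohen--Lenstra-type reversible measure. Your argument therefore has to stand on its own, and it does; it is a correct, standard modern proof, with one correction below.

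The individual steps are right:
\begin{itemize}
\item the Smith normal form reduction;
\item the observation that the event $ker\cong G$ depends only on $\mathcal{M}_{k,k}\bmod p^{N+1}$;
\item the exact moment $\probE\,\#\mathrm{Sur}(coker(\mathcal{M}_{k,k}),H)=\prod_{i=0}^{r-1}(1-p^{i-k})$, where $r=\dim_{\mathbb{F}_p}H/pH$.
\end{itemize}
The Cohen--Lenstra moment identity is classical. In your extension-counting sketch, the fact doing the work is $|\mathrm{Ext}^1(H,K)|=|\mathrm{Hom}(H,K)|$ for finite abelian $p$-groups, and you should state it.

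\textbf{Correction.} The hypothesis of Wood's moment theorem is that the limiting moments be $O(|\wedge^2 H|)$. For cyclic $H$ this means bounded, so a bound of the form $p^{O(\log|H|)}=|H|^{O(1)}$ would not suffice. What you actually have is much stronger: the moments are at most $1$ for every $k$ and tend to $1$. Note also that Wood's theorem controls $X\otimes\Z/a$ for a fixed $a$. Your truncation with $a=p^{N+1}$ is exactly what turns its conclusion into the statement about $ker\cong G$, so that step is not wasted.

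\textbf{The alternative.} Your second route is also sound and is closer in spirit to Friedman and Washington's exact count. Condition on $\mathcal{M}_{k,k}\bmod p$ and take a Schur complement. This shows that, given corank $r$, $coker(\mathcal{M}_{k,k})\cong coker(pA)$ with $A$ an $r\times r$ Haar matrix, so $p\cdot coker\cong coker(A)$. Induction on $|G|$, with base case $G=0$, then gives the exact finite-$k$ formula
$\probP(coker\cong G)=\prod_{i=1}^{k}(1-p^{-i})\prod_{i=k-r+1}^{k}(1-p^{-i})/|Aut(G)|$,
where $r$ is the rank of $G$. The only bookkeeping needed is the identity $|Aut(G)|=p^{r^2}\prod_{j=1}^{r-s}(1-p^{-j})\,|Aut(pG)|$, with $s$ the rank of $pG$. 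This identity follows from the partition formula for $|Aut(G)|$ by deleting the first column.

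\textbf{What each route buys.} The recursion is elementary and yields exact probabilities at every $k$. The moment method needs a nontrivial uniqueness theorem, but it extends to non-Haar entries. The Markov-chain viewpoint behind this paper and its companion gives something neither does: the law of the whole corner process, not just its one-time marginals.
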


\noindent
We can consider many other distributions on matrices. This leads to many interesting variants of \textit{Question 1}. The reader is referred to \cite[Sections 3.1 and 3.2]{WoodICM} for a survey. See also \cite{Van_Peski_2021}, \cite{CheongYu} and \cite{Van_Werde_2024} for other interesting random matrix models, not covered  in \cite{WoodICM}. 

\begin{remark}
Usually the question and theorem above are phrased in terms of 
\[
coker(M)\defeq coker\left(\Z_p^k \xrightarrow{M} \Z_p^{k}\right)
\]
rather than $ker(M)$. Either group is determined by the other, and when $M$ in non-singular, both groups are canonically isomorphic via multiplication by $M$. We will work with the kernel as that choice simplifies some of our arguments later on, such as the proof of \autoref{claim:splittingisomorphism}.
\somespace
Both choices are equivalent.

\end{remark}

\subsection{The present paper}

In \cite{arxiv}, we undertook a deeper study of the above result of Friedman and Washington. We will briefly recall the details:

\paragraph{A random process valued in abelian $p$-groups.} Let $Haar$ be an infinite Haar random matrix. Let $Haar_{n,n}$ be its top $n \times n$ left corner. $ker(Haar_{n,n})$ is a random process valued in abelian $p$-groups. In \cite{arxiv}, we studied this process and found that:

\begin{itemize}
\item $ker(Haar_{n,n})$ is a reversible Markov chain; moreover, the Markov chain can be described as a random walk on an explicit weighted graph, whose edges are parametrized by certain exact sequences.
\item The transition operator has a simple explicit description:
\begin{itemize}
\item Take a random $\Z_p$-extension, then mod out by the span of a Haar random element.
\end{itemize}
\item The spectral problem for this Markov chain has an explicit solution.
\end{itemize}

\paragraph{Symmetric Matrices}
In this paper, we would like to generalize some of the results of \cite{arxiv} to a different random matrix model:

\begin{definition}
Define $SymHaar$ to be the random matrix obtained by restricting the Haar measure on infinite matrices to infinite symmetric matrices, and let $SymHaar_{n,n}$ be the top-left $n\times n$ corner.
\end{definition}
\noindent
Regarded as a process on finite abelian $p$-groups, $ker(SymHaar_{n,n})$ does \textit{not} have the Markovian property. However, we will discover that we can recover the Markovian property if take into account the extra information encoded by the following pairing on $ker(M)$:

\begin{equation}
\label{eqn:pairing}
ker(M) \times ker(M) \rightarrow \Q_p/\Z_p \, : \, (v,w) \mapsto
v^{T}Mw \, \mod \Z_p
\end{equation}

\noindent
This motivates the following definition:

\begin{definition}
Given a symmetric matrix $M$, we define $\Ker(M)$ to be the abelian group $ker(M)$, together with the pairing (\ref{eqn:pairing}), \textit{up to isomorphism}.
\end{definition}


\begin{Remark}
\label{rem:perfect}
For an equivalent definition of this pairing, see (\ref{eqn:isokernel}), which originally appeared as \cite[(2)]{BKLPR}. It is a direct consequence of this latter definition that the pairing (\ref{eqn:pairing}) is perfect whenever $M$ is non-singular. Of course, the fact that the pairing is perfect can also be checked directly.
\end{Remark}

\begin{Remark}
In the case when $M$ is non-singular, we can identify $ker(M)$ and $coker(M)$ via multiplication by $M$. Via this identification, we can pull back the pairing on $ker(M)$ to $coker(M)$. This pairing will have the form:
\end{Remark}
\begin{equation}
\label{eqn:pairingtwo}
coker(M) \times coker(M) : (v,w) \mapsto v^T M^{-1}w \mod \Z_p 
\end{equation}
\noindent
This is the pairing used in \cite{Woodetal}. Note that it follows, for example, from \cite[Theorem 2]{Woodetal} that a Haar random symmetric matrix is non-singular with probability $1$.

\paragraph{\textbf{Main results.}}
The first main result of this paper is the following:

\begin{theorem}
\label{thm:symmarkovchain}
$\Ker(SymHaar_{n,n})$ is a Markov chain.
\end{theorem}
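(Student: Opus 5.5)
We will show that the conditional law of $\Ker(SymHaar_{n+1,n+1})$ given the whole history $SymHaar_{1,1},\dots,SymHaar_{n,n}$ depends only on the isomorphism class of $\Ker(SymHaar_{n,n})$. Write $M=SymHaar_{n,n}$, which is almost surely non-singular. The next corner is
\[
M'=\begin{mat}{cc} M & v\\ v^T & a\end{mat},
\]
where $v\in\Z_p^n$ and $a\in\Z_p$ are Haar distributed and independent of each other and of the whole history. Since the history is a function of $M$, it suffices to show that the law of $\Ker(M')$ given $M$ is a function of $\Ker(M)$ alone.

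\textbf{Step 1: a symmetry reduction.} Let $M_1,M_2$ be non-singular symmetric matrices with $\Ker(M_1)\cong\Ker(M_2)$. We will produce $g,h\in GL_n(\Z_p)$ with $M_2=gM_1h$ that is compatible with the bordering construction. The map
\[
\begin{mat}{cc} M & v\\ v^T & a\end{mat}\mapsto \begin{mat}{cc} g & 0\\ 0& 1\end{mat}\begin{mat}{cc} M & v\\ v^T & a\end{mat}\begin{mat}{cc} g^T & 0\\ 0& 1\end{mat}
\]
preserves $\Ker$ up to isomorphism and sends $(v,a)\mapsto(gv,a)$, which preserves Haar measure. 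So it is enough to find $g\in GL_n(\Z_p)$ with $gM_1g^T=M_2$. That alone may fail: $\Ker$ records only the "torsion part" of the form, not the full congruence class over $\Z_p$. We therefore expect to use a more flexible invariance. Conjugating $M$ by $g$ changes $\Ker(M)$ only by a transport of structure. Moreover, the law of $\Ker(M')$ should depend on $M$ only through $(\mathrm{coker}(M), \text{pairing } M^{-1})$, because of the following description.

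\textbf{Step 2: intrinsic description of the transition.} We will compute $\Ker(M')$ using the Schur complement. Working over $\Q_p$, $\det M'=\det M\,(a-v^TM^{-1}v)$. More intrinsically, consider the finite group $G=\mathrm{coker}(M)$ with pairing $\langle x,y\rangle=x^TM^{-1}y$. Let $\bar v$ be the image of $v$ in $G$; it is Haar (uniform) on $G$ because $v$ is Haar in $\Z_p^n$. The plan is to show that $\mathrm{coker}(M')$ with its pairing is determined, up to isomorphism, by the triple $(G,\langle\,,\rangle,\bar v)$ together with the element $a-v^TM^{-1}v\in\Q_p$. Concretely, we would express $\mathrm{coker}(M')$ as the result of adjoining a $\Z_p$ (extension by one generator $e$ with $p$-adic relation determined by $a-\tilde q(\bar v)$) and quotienting by the span of $\bar v + (\cdots)e$. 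This mirrors the non-symmetric transition ("take a $\Z_p$-extension, then mod out by a Haar element"). The pairing on the result is computed from the block inverse of $M'$.

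The obstacle is that $v^TM^{-1}v$ lies in $\Q_p$, not in $\Q_p/\Z_p$. So it is not a function of $(G,\langle,\rangle,\bar v)$ alone; only its class mod $\Z_p$ is, and even that only at the quadratic refinement level. This is the main step to handle. The fix will be to observe that $a$ is Haar on $\Z_p$ and independent of $v$, so $a-v^TM^{-1}v$, conditioned on its class mod $\Z_p$ (which equals $-\langle\bar v,\bar v\rangle$, well defined since $\langle,\rangle$ is symmetric with $M^{-1}$ having denominators controlled by $G$; for $p=2$ one must take care with the quadratic refinement), is uniform on that coset. Hence the joint law of $(\bar v,\,a-v^TM^{-1}v)$ is: $\bar v$ uniform on $G$, and the second coordinate uniform on the coset $-\langle\bar v,\bar v\rangle+\Z_p$. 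This law depends only on $(G,\langle,\rangle)$.

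Combining the steps, the conditional law of $\Ker(M')$ given the past is a function of $\Ker(M)$ alone. By the identification of $\ker$ and $\mathrm{coker}$ pairings via multiplication by $M$ in the Remark, this gives the Markov property. We expect the main difficulty to be the computation in Step 2: verifying that the resulting paired group depends only on these data, and not on lifts.
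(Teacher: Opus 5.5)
You have the probabilistic half right, and it is exactly the paper's. After identifying $\ker$ with $\mathrm{coker}$ via $M$, your $\bar v$ and your Schur complement $q=a-v^TM^{-1}v$ are the paper's $g$ and $q$. Your observations that $\bar v$ is uniform on $G$, and that given $v$ the complement $q$ is Haar on the coset $-\langle\bar v,\bar v\rangle+\Z_p$, are the content of Theorem~\ref{thm:distribution}.

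The gap is the structural half, which you only announce (``the plan is to show\dots''). You need that the paired group $\mathrm{coker}(M')$ is, up to isomorphism, a function of $(G,\langle\,,\rangle,\bar v,q)$ alone, compatibly with isomorphisms of $(G,\langle\,,\rangle)$. Until this is proved, the law of $(\bar v,q)$ says nothing about the law of $\Ker(M')$, so the Markov property does not follow. In the paper this step is Lemma~\ref{claim:splittingisomorphism} together with Lemma~\ref{lem:recipe}, argued as follows:
\begin{itemize}
\item In $\Q_p^{n+1}$ with the form of $M'$, there is a canonical isomorphism $V_n^{\perp}/V_n\cong\Ker(M)\times\Q_p$. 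The kernel is finite, so the sequence splits and the pairing has no cross terms.
\item The new basis vector maps to $(g,1)$.
\item $\Ker(M')=V_{n+1}^{\perp}/V_{n+1}$ is $(g,1)^{\perp}/(g,1)$ computed inside that group, which makes independence of lifts automatic.
\end{itemize}

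Within your own cokernel framework the missing step is a short computation. Consider the map $(y,s)\mapsto(\bar y,\,s-v^TM^{-1}y)$ from $\Z_p^{n}\times\Z_p$ to $G\times\Q_p$. It has kernel $\{(Mu,v^Tu):u\in\Z_p^n\}$ and sends $(v,a)$ to $(\bar v,q)$. Hence it identifies $\mathrm{coker}(M')$ with $\{(x,t)\in G\times\Q_p : t\equiv-\langle\bar v,x\rangle \bmod \Z_p\}/\Z_p(\bar v,q)$. The block inverse of $M'$ gives $z_1^TM'^{-1}z_2=y_1^TM^{-1}y_2+t_1t_2/q$. So the induced pairing is $\langle x_1,x_2\rangle+t_1t_2/q \bmod \Z_p$, which is manifestly intrinsic. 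Substituting $t=q\tau$ recovers the paper's $(g,1)^{\perp}/(g,1)$ in $G\times\Q_p$ with pairing $\langle\,,\rangle\oplus q\,\tau_1\tau_2$. With this added, your argument is complete.

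Two minor points:
\begin{itemize}
\item Step 1 can be dropped. $\Ker$ indeed does not determine the congruence class, e.g.\ $M=(1)$ versus $M=(u)$ with $u$ a non-square unit.
\item There is no $p=2$ subtlety. The substitution $v\mapsto v+Mu$ changes $v^TM^{-1}v$ by $2u^Tv+u^TMu\in\Z_p$, so its class mod $\Z_p$ is $\langle\bar v,\bar v\rangle$ for the bilinear pairing, for every $p$.
\end{itemize}
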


\begin{mysection}
Thus, the state space consists of isomorphism classes of abelian $p$-groups endowed with a symmetric pairing.
\end{mysection}

\noindent
In the next paragraph, we will give a description of the generator of the Markov chain. For now, we make a few remarks.
\newline

\noindent
In the same way that \cite[Theorem 1.1]{arxiv} provides a dynamical generalization of  the result of Friedman and Washington, \autoref{thm:symmarkovchain} can be regarded as a dynamical version of the following theorem of \cite{Woodetal}:

\begin{extratheorem}\cite[Theorem 2]{Woodetal}
Let $G$ be a finite abelian $p$-group of rank $r$ with a perfect symmetric pairing $<\cdot,\cdot>$. Then:
\[
\probP\Big(\Ker(SymHaar_{n,n}) \cong (G,<\cdot,\cdot>)\Big)= \]
\[\frac{
\prod_{j=n-r+1}^{n} (1-p^{-j}) \prod_{i=1}^{\lceil (n-r)/2 \rceil}(1-p^{1-2i})
}{
\# G
\# Aut(G,<\cdot,\cdot>)
}
\]
where $Aut(G,<\cdot,\cdot>)$ is the group of automorphisms of $G$ that preserve the pairing $<\cdot,\cdot>$. 

\end{extratheorem}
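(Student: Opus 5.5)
We prove the formula by a double-counting (orbit–stabilizer) argument. By the identification in (\ref{eqn:pairingtwo}), it suffices to compute the probability that $coker(M)$, with the pairing $(v,w)\mapsto v^TM^{-1}w$, is isomorphic to $(G,<\cdot,\cdot>)$, where $M$ is a Haar random symmetric $n\times n$ matrix. Throughout we may assume $M$ is non-singular, which holds almost surely.

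We count pairs $(M,\pi)$. Here $\pi:\Z_p^n\to G$ is a surjection with kernel $M\Z_p^n$, and we require that the induced isomorphism $coker(M)\cong G$ carries the pairing (\ref{eqn:pairingtwo}) to $<\cdot,\cdot>$. For a fixed $M$ whose cokernel-with-pairing is isomorphic to $(G,<\cdot,\cdot>)$, the admissible $\pi$ form a torsor under $Aut(G,<\cdot,\cdot>)$, so there are exactly $\#Aut(G,<\cdot,\cdot>)$ of them. Writing $\mu_\pi$ for the Haar measure of the set of admissible $M$ for a given $\pi$, this gives
\[
\probP\Big(\Ker(SymHaar_{n,n})\cong(G,<\cdot,\cdot>)\Big)\cdot \#Aut(G,<\cdot,\cdot>) \;=\; \sum_{\pi} \mu_\pi .
\]
The sum runs over all surjections $\pi:\Z_p^n\to G$. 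Their number is the standard count
\[
\#Surj(\Z_p^n,G)=|G|^n\prod_{j=n-r+1}^{n}(1-p^{-j}),
\]
obtained by counting surjections onto $G/pG\cong\mathbb{F}_p^r$. It therefore remains to show that $\mu_\pi$ does not depend on $\pi$ and equals
\[
\mu_\pi=|G|^{-(n+1)}\prod_{i=1}^{\lceil (n-r)/2\rceil}(1-p^{1-2i}).
\]
Multiplying the two factors gives the claimed formula, since $|G|^n\cdot|G|^{-(n+1)}=1/|G|$.

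To compute $\mu_\pi$, write $G\cong\bigoplus_{i\le r}\Z/p^{\lambda_i}$ and change basis by some $g\in GL_n(\Z_p)$. The map $M\mapsto gMg^T$ preserves the Haar measure on symmetric matrices, so we may assume $\pi$ is reduction of the first $r$ coordinates modulo $p^{\lambda_i}$. The condition on $M$ then splits into two parts.

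\textbf{Image condition.} We need $M\Z_p^n=K:=D\Z_p^n$, where $D=\mathrm{diag}(p^{\lambda_1},\dots,p^{\lambda_r},1,\dots,1)$. By symmetry this forces $M=DND$ on the first $r$ rows and columns. Equivalently, $M=DSD$ for a symmetric $S$, where the rows and columns indexed by $>r$ carry no factor of $D$. Concretely, the block entries of $M$ are divisible by $p^{\lambda_i+\lambda_j}$ (top-left block) or $p^{\lambda_i}$ (off-diagonal block), and the image condition reduces to $S$ being invertible over $\Z_p$.

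\textbf{Pairing condition.} The form $v^TM^{-1}w$ on $G$ becomes $v^TD^{-1}S^{-1}D^{-1}w$. Only the top-left $r\times r$ block of $S^{-1}$ modulo $p^{\lambda_i+\lambda_j}$-type congruences matters, and prescribing it fixes that block to the given Gram matrix of $<\cdot,\cdot>$.

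The Jacobian of $S\mapsto DSD$ on symmetric matrices contributes the factor
\[
\prod_{i\le j\le r}p^{-\lambda_i-\lambda_j}\prod_{i\le r<j}p^{-\lambda_i}=|G|^{-(n+1)}.
\]
One checks this by noting that each $\lambda_i$ appears $(r+1)+(n-r)=n+1$ times.

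It remains to find the probability that a Haar symmetric $S$ is invertible with a prescribed class of the top-left $r\times r$ block of $S^{-1}$. Since that class is prescribed to be invertible, we can use Schur complements. Prescribing the relevant part of $(S^{-1})_{11}$ amounts, after the measure-preserving change of variables $S\mapsto S^{-1}$ on that block (valid because the block is invertible), to prescribing $S_{11}$ up to the same congruences. The measure of that prescription is already absorbed in the count above. What remains is that the Schur complement $S_{22}-S_{21}S_{11}^{-1}S_{12}$, which is Haar distributed on symmetric $(n-r)\times(n-r)$ matrices, must be invertible. Its reduction mod $p$ is a uniform symmetric matrix over $\mathbb{F}_p$, which is nonsingular with probability $\prod_{i=1}^{\lceil (n-r)/2\rceil}(1-p^{1-2i})$. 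This is the classical count of nonsingular symmetric matrices over $\mathbb{F}_p$, proved by induction on size by splitting off a hyperbolic plane or a nondegenerate line.

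The main obstacle is the bookkeeping in the pairing condition. One must verify carefully that fixing the pairing imposes exactly congruences on $S_{11}$, namely entry $(i,j)$ modulo $p^{\min(\lambda_i,\lambda_j)}$, whose total measure is exactly cancelled against the Jacobian normalisation, so that no extra power of $|G|$ or $p$ survives. I would first check this in the cyclic case $r=1$, where the pairing is a unit modulo $p^{\lambda}$ up to squares, and then treat the general case by the same change of variables applied blockwise.
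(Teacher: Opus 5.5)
The paper gives no proof of this statement; it quotes it from \cite{Woodetal}, so your argument has to stand on its own. Its overall structure is sound. The double count of pairs $(M,\pi)$, the $Aut(G,<\cdot,\cdot>)$-torsor, the count $|G|^n\prod_{j=n-r+1}^{n}(1-p^{-j})$ of surjections, the reduction to a standard $\pi$ via $M\mapsto gMg^T$, and the target value of $\mu_\pi$ are all correct.

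\textbf{The gap is in the computation of $\mu_\pi$, which starts from a false parametrization.} The image condition does \emph{not} force $M=DSD$. The condition $M\Z_p^n=D\Z_p^n$ means $M=DU$ with $U\in GL_n(\Z_p)$, and symmetry only gives $p^{\max(\lambda_i,\lambda_j)}\mid M_{ij}$. Already for $n=r=1$ and $G=\Z/p^{\lambda}$, the admissible $M$ are $p^{\lambda}u$ with $u$ a unit, not elements of $p^{2\lambda}\Z_p$. Conversely, $\det(DSD)=\det(D)^2\det S$, so for invertible $S$ the cokernel of $DSD$ has order $|G|^2$. Hence $DSD$ never satisfies the image condition when $G\neq 0$. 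Consistently, $D^{-1}S^{-1}D^{-1}$ does not even induce a well-defined pairing on $G$.

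So your Jacobian $|G|^{-(n+1)}$ is computed on the wrong set, and the later step is unsupported. The congruences you impose on $S_{11}$ have total measure $\prod_{i\le j\le r}p^{-\min(\lambda_i,\lambda_j)}<1$, and nothing in your computation cancels them. The block step also confuses $(S^{-1})_{11}$, which is the inverse of the Schur complement of $S_{22}$, with $S_{11}^{-1}$. Your final number agrees only because $S\mapsto DSD$ and $M\mapsto M^{-1}$ both have Jacobian of the form $|\det|_p^{n+1}$.

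\textbf{The missing idea is to express admissibility through $M^{-1}$.} Let $\tilde A$ be a symmetric lift to $\Q_p$ of the Gram matrix $(\langle \pi e_i,\pi e_j\rangle)_{i,j\le n}$; it vanishes outside the top-left $r\times r$ block. Then $\pi$ is admissible for $M$ if and only if $M^{-1}\in\tilde A+\mathrm{Sym}_n(\Z_p)$.
\begin{itemize}
\item Necessity is immediate.
\item For sufficiency, take $w=Mu$. Then $\langle \pi v,\pi w\rangle\equiv v^Tu\equiv 0$ for all $v$, so perfectness gives $M\Z_p^n\subseteq\ker\pi$.
\item If $\pi w=0$, then $v^TM^{-1}w\in\Z_p$ for all $v$, so $\ker\pi\subseteq M\Z_p^n$.
\end{itemize}

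Substituting $N=M^{-1}$ gives $|dM|=|\det N|_p^{-(n+1)}|dN|=|G|^{-(n+1)}|dN|$ on this set. This is the true origin of the factor $|G|^{-(n+1)}$.

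Now write $N=\tilde A+X$ with $X$ Haar on $\mathrm{Sym}_n(\Z_p)$, and set $D_1=\mathrm{diag}(p^{\lambda_1},\dots,p^{\lambda_r})$.
\begin{itemize}
\item Perfectness says $\tilde A_{11}D_1$ is integral and invertible mod $p$, since it is the transpose of the matrix of $G\to Hom(G,\Q_p/\Z_p)$.
\item Meanwhile $X_{11}D_1\equiv 0 \bmod p$, so $N_{11}D_1\in GL_r(\Z_p)$.
\item Hence $N_{11}^{-1}=D_1(N_{11}D_1)^{-1}$ is integral and divisible by $p$.
\item The Schur complement $X_{22}-X_{21}N_{11}^{-1}X_{12}$ is then congruent to $X_{22}$ mod $p$.
\item Therefore $N^{-1}$ is integral exactly when $X_{22}$ is nonsingular mod $p$, which has probability $\prod_{i=1}^{\lceil (n-r)/2\rceil}(1-p^{1-2i})$.
\end{itemize}
No congruence bookkeeping on any block is needed.
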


\begin{remark}
As opposed to \cite{arxiv}, we do not study the spectral problem for the Markov chain of \autoref{thm:symmarkovchain} in this paper.
\end{remark}

\paragraph{\textbf{Explicit form of the generator of the Markov chain in the symmetric case.}}
\label{sec:defdeltas}

Let $(G,<,>)$ be a finite abelian group, equipped with a perfect symmetric pairing. $\Delta_{S}(G,<,>)$ is defined as the output of the following random algorithm:

\begin{enumerate}
\item Pick an element $g \in G$ uniformly at random.
\item Pick a symmetric pairing on $G \times \Q_p$ \textit{uniformly\footnote{What this means is defined below.}} at random from all choices that satisfy:
\begin{itemize}
\item[(A)] The pairing, restricted to $G$, is the original pairing on $G$.
\item[(B)] $(g,1) \in (g,1)^{\perp}$ with respect to the pairing.
\end{itemize}
\end{enumerate}

Then $\Delta_{S}(G,<,>)$ is 
\begin{equation}
\label{eqn:defdeltas}
(g,1)^{\perp}/(g,1)
\end{equation}

This is a finite group almost surely.

\begin{remark}
We need to say a few words about how we choose the pairing. There is a natural way to do this.
\begin{itemize}
\item We already know the pairing on $G$.
\item There can be no cross-terms, as $G$ is finite, and hence \[Hom\Big(G,Hom(\Q_p,\Q_p/\Z_p)\Big)=0\] 
\end{itemize}
Hence, we need only choose a pairing on $\Q_p$. Every pairing on $\Q_p$ is of the form:
\[
<q_1, q_2> \defeq q q_1 q_2 \mod \Z_p
\] 
Thus, to determine the pairing, we need to choose $q \in Q_p$. Condition (B) implies that $q$ must lie in a compact subset of $\Q_p$. Finally, compact subsets of $\Q_p$ carry a natural probability measure. 
\end{remark}

\begin{remark} Concretely, if the pairing on $\Q_p$ is of the form \begin{equation}
\label{eqn:qpairing}
<q_1,g_2> \defeq q q_1 q_2\mod \Z_p,
\end{equation}
 then
\[
(g,1) \in (g,1)^{\perp} \Leftrightarrow q \equiv -<g,g> \mod \Z_p.
\] 
\end{remark}


\noindent
We now come to the main theorem of this paper, for symmetric matrices.

\begin{theorem}
\label{eqn:matrixkernelequation}
\label{thm:maintheorem}
Suppose $M$ is symmetric and non-singular. Then $\Delta_S$ satisfies
   \[
   \Delta_S \Big[ \Ker(M) \Big]=
   \Ker 
   \begin{mat}{ccc|c}
   & & & *\\
   & M& &\vdots\\
   & & & *\\ \hline
   * & \hdots & * & *
   \end{mat} \hspace{1in} \text{ for all } \, M
   \]
   where the entries denoted by $*$ are uniformly random variables, that are independent, subject to the symmetry condition.
\end{theorem}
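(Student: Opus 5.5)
Write the augmented matrix as
\[
M' = \begin{mat}{cc} M & v \\ v^T & c \end{mat},
\]
with $v \in \Z_p^n$ Haar and $c \in \Z_p$ Haar, independent. The plan is to realize $\ker(M')$ and its pairing inside $(\Q_p/\Z_p)^{n} \oplus \Q_p/\Z_p$, and to match the two random choices $(v,c)$ with the two random choices $(g,q)$ in the definition of $\Delta_S$.

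\textbf{Step 1: change of coordinates.} Since $M$ is non-singular, $M^{-1}$ exists over $\Q_p$. Set $w = M^{-1}v \in \Q_p^n$. An element $(x,t)$ with $x \in (\Q_p/\Z_p)^n$ and $t \in \Q_p/\Z_p$ lies in $\ker(M')$ if and only if
\[
Mx + vt \equiv 0 \quad\text{and}\quad v^Tx + ct \equiv 0 \pmod{\Z_p}.
\]
I will work with lifts to $\Q_p$ and write $x = y - wt$, so that the first condition becomes $y \in \ker(M)$ modulo $\Z_p$-lattice ambiguities. Concretely, I will identify the "extended" group $\{(y,t)\}$ with $G \times \Q_p$, where $G = \ker(M)$, by keeping $t \in \Q_p$ as a genuine $p$-adic number rather than a class mod $\Z_p$. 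Under this identification:
\begin{itemize}
\item the image of $w$ mod $\Z_p^n$ gives an element $g \in G$ (up to sign);
\item since $v$ is Haar, $w = M^{-1}v$ ranges uniformly over $M^{-1}\Z_p^n$;
\item since $M^{-1}\Z_p^n/\Z_p^n \cong \ker(M)$ via multiplication by $M$, the element $g$ is uniform in $G$.
\end{itemize}

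\textbf{Step 2: the extended pairing.} I will compute the pairing $x^T M' x$ in the new coordinates and show that it is a symmetric pairing on $G \times \Q_p$ that:
\begin{itemize}
\item restricts to the pairing (\ref{eqn:pairing}) on $G$;
\item has no cross term, as explained in the remark preceding the theorem;
\item has diagonal coefficient $q = c - v^T M^{-1} v$ on $\Q_p$.
\end{itemize}
Conditional on $v$, the number $c$ is Haar in $\Z_p$, so $q$ is uniform on the coset $-w^T M w + \Z_p$. This coset should be exactly $-\langle g,g\rangle + \Z_p$, which is precisely the compact set singled out by condition (B). The signs will be checked against the preceding remark.

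\textbf{Step 3: identify the kernel with $(g,1)^\perp/(g,1)$.}
\begin{itemize}
\item The second kernel condition $v^Tx + ct \equiv 0$ becomes, after substitution, the statement that $(y,t)$ pairs to zero with $(g,1)$.
\item The quotient by $(g,1)$ accounts for the ambiguity coming from lifting $t$ from $\Q_p/\Z_p$ to $\Q_p$: shifting $t$ by $1$ shifts $y$ by $w$, i.e.\ by $g$.
\end{itemize}
Thus I will construct an explicit surjection $(g,1)^\perp \to \ker(M')$, $(y,t) \mapsto (y - wt, t)$, with kernel $\langle (g,1)\rangle$, and then verify that it carries the extended pairing to the pairing (\ref{eqn:pairing}) for $M'$. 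This is presumably the role of \autoref{claim:splittingisomorphism}. Finally, I will check that the pair $(g,q)$ is distributed exactly as in the definition of $\Delta_S$: $g$ uniform in $G$ and, conditionally on $g$, $q$ uniform on its coset. This holds because $c$ is independent of $v$, and the extra randomness in $v$ beyond $g$ (the lattice part) does not affect the isomorphism class.

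\textbf{Main obstacle.} The hardest step is this last bookkeeping, namely making the identification well defined modulo lattices. The element $w$ is only determined modulo $\Z_p^n$ through $g$, and changing $w$ by a lattice vector changes $v$ by $M\Z_p^n$ and changes $q$ by an element of $\Z_p$. I must check that the isomorphism class of the resulting paired group depends only on $(g,q)$, and that the map is injective modulo exactly $(g,1)$. I would handle this by working with honest $\Q_p$ lifts throughout and reducing only at the end, using $\ker(M)\cong M^{-1}\Z_p^n/\Z_p^n$ as in (\ref{eqn:isokernel}).
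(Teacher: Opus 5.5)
Your proof is correct. Structurally it is the paper's proof written in coordinates. The substitution $y = x + wt$ is exactly the canonical splitting $V^{\perp}/V \cong \ker(M)\times\Q_p$ of \autoref{claim:splittingisomorphism} for the corner $M$ of $M'$, the divisible complement being $\{(-wt,t)\}$. Likewise $([w],1)$ is the image of the last coordinate vector, i.e.\ the paper's $(g,1)$ of \autoref{extrastatement}. With your sign convention $g=[w]$ exactly, so the ``up to sign'' hedge can be dropped.

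The genuine difference is how you obtain the law of $(g,q)$. The paper never computes $q$. It gets uniformity of $g$ by pulling back the Haar random last column of $M'$ to the fixed subgroup $\ker(M)$. It gets $\Z_p$-translation invariance of $q$ by shifting the corner entry. It then appeals to uniqueness of the law with these two properties and $q\equiv-\langle g,g\rangle$. You instead identify $q=c-v^TM^{-1}v$ as the Schur complement, i.e.\ you use the congruence $M'=P^{T}\,\mathrm{diag}(M,q)\,P$ with $P(x,t)=(x+wt,t)$. Both properties then follow directly from the independence of $v$ and $c$, as does the almost-sure non-singularity of $M'$, since $\det M'=q\det M$. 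Your route is more elementary and shows exactly which features of the law of $(v,c)$ are used; the paper's is coordinate-free and never needs $M^{-1}$.

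The ``main obstacle'' you flag is not one. $(g,1)^{\perp}/(g,1)$ is built from $(G,\langle\,,\rangle,g,q)$ alone, and your explicit isomorphism holds for every realization of $(v,c)$. So the lattice ambiguity in $w$ never has to be resolved. (Minor slip: $M^{-1}\Z_p^n/\Z_p^n$ is literally $\ker(M)\subset(\Q_p/\Z_p)^n$; multiplication by $M$ identifies it with $\mathrm{coker}(M)$ instead.)
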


\begin{Corollary}
\label{thm:askerneltheorem}
$\Delta_{S}$ is the generator of the Markov chain $\Ker(SymHaar_{n,n})$.
\end{Corollary}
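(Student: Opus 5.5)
The proof deduces the corollary from \autoref{thm:maintheorem} by conditioning on the first $n$ rows and columns. Write $SymHaar_{n+1,n+1}$ as the block matrix with top-left corner $SymHaar_{n,n}$, a new column $c$ (with its transpose as the new row) and a new corner entry $d$. By definition of $SymHaar$ as the restriction of Haar measure to symmetric matrices, the independent entries are those on or above the diagonal, each Haar on $\Z_p$. Hence $(c,d)$ is uniform and independent of the whole $\sigma$-algebra $\mathcal{F}_n$ generated by $SymHaar_{k,k}$ for $k\le n$. This is exactly the distribution of the entries marked $*$ in \autoref{thm:maintheorem}.

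The first step is to dispose of singular matrices. By \cite[Theorem 2]{Woodetal}, as recalled in the remark before \autoref{thm:symmarkovchain}, each $SymHaar_{n,n}$ is non-singular almost surely. A countable union of null events is null, so almost surely every corner is non-singular, and \autoref{thm:maintheorem} may be applied to $M=SymHaar_{n,n}$ at every step.

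The second step is the conditioning itself. Fix the matrix $M = SymHaar_{n,n}$, i.e. condition on $\mathcal{F}_n$. The conditional law of $\Ker(SymHaar_{n+1,n+1})$ is the law of $\Ker$ of the bordered matrix with independent uniform border. By \autoref{thm:maintheorem} this equals the law of $\Delta_S[\Ker(M)]$. That law depends on $M$ only through the isomorphism class $\Ker(M)$, since $\Delta_S$ is defined intrinsically on the pairing group $(G,<,>)$; its description uses only a uniform element of $G$ and a uniform pairing extension, both of which are isomorphism-invariant. Therefore
\[
\probP\Big(\Ker(SymHaar_{n+1,n+1})\cong H \,\Big|\, \mathcal{F}_n\Big)=\probP\Big(\Delta_S[\Ker(SymHaar_{n,n})]\cong H\Big).
\]
The right side is a function of $\Ker(SymHaar_{n,n})$ alone. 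This gives the Markov property with respect to the larger filtration $\mathcal{F}_n$, and a fortiori with respect to the process's own filtration, which reproves \autoref{thm:symmarkovchain}. It also identifies the one-step transition kernel as $\Delta_S$, independent of $n$.

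The only delicate point is this isomorphism-invariance: one must check that the law produced by $\Delta_S$ is a well-defined function of the isomorphism class of $(G,<,>)$. That holds because an isomorphism $(G,<,>)\cong(G',<,>')$ carries uniform $g$ to uniform $g'$ and the admissible $q$-set $-<g,g>+\Z_p$ to the corresponding one. Everything else is the independence of the border entries, which is immediate from the definition of $SymHaar$.
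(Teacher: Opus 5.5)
Your argument is correct and is the deduction the paper intends, since the paper states this corollary as an immediate consequence of \autoref{thm:maintheorem} without writing it out: condition on $SymHaar_{n,n}$, observe that the border entries are independent and uniform, and apply the theorem. Your extra checks (almost-sure non-singularity of every corner, and isomorphism-invariance of $\Delta_S$) are exactly the details the paper leaves implicit. One small point: your displayed right-hand side should be read as the transition kernel $\Delta_S$ evaluated at the random state $\Ker(SymHaar_{n,n})$, not as an unconditional probability.
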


\subsubsection{Reversibility of the Markov chain}

In \S \ref{sec:reversibility} of this paper, we will prove the following theorem:

\begin{theorem}
The Markov chain generated by $\Delta_S$  on the state space $X_S$ is reversible with respect to the probability measure:
\begin{equation}
\label{def: measure s}
\frac{
\prod_{i=1}^{\infty}(1-p^{1-2i})
}{
\# G
\# Aut(G,<\cdot,\cdot>)
}
\end{equation}
\end{theorem}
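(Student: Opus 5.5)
Reversibility means detailed balance: writing $\mu(G) = c/(\#G\,\#Aut(G,<\cdot,\cdot>))$ with $c=\prod_{i\ge1}(1-p^{1-2i})$, we want $\mu(G)\,\Delta_S(G\to H) = \mu(H)\,\Delta_S(H\to G)$ for all $G,H\in X_S$, where $\Delta_S(G\to H)$ is the probability that the algorithm of \S\ref{sec:defdeltas} outputs something isomorphic to $H$ starting from $G$. I plan to follow the strategy of \cite{arxiv}. The first step is to make the transition probability combinatorial. Choose $g\in G$ uniformly, then $q\equiv -<g,g>$ modulo $\Z_p$ with the fractional part fixed and $\Z_p$-part Haar. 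Then $(g,1)^{\perp}/(g,1)$ is determined by the data $(g,q)$. The aim is to express
\[
\mu(G)\,\Delta_S(G\to H)=\frac{c}{\#G\,\#Aut(G)}\cdot \frac{\#\{\text{``edges'' from }G\text{ to }H\}}{\text{normalization}}
\]
as a count, divided by automorphism groups, of a symmetric object: a pairing-compatible configuration that contains both $G$ and $H$ as subquotients.

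The natural candidate for such a symmetric object is an isotropic-extension diagram. Let $E=G\oplus\Q_p$ carry the random pairing, with $v=(g,1)$ isotropic. Then $H=v^{\perp}/v$. Conversely, I expect to recover $G$ from $H$ through a dual element $w$ of the analogous extension $H\oplus\Q_p$, with $G\cong w^{\perp}/w$. Concretely, I would form the rank-two object $E'=G\oplus\Q_p^2$ (equivalently $H\oplus\Q_p^2$), which has a ``hyperbolic-like'' plane containing two isotropic lines $\langle v\rangle$ and $\langle w\rangle$ whose respective reductions give $H$ and $G$. Detailed balance then reduces to counting such pairs with the roles of $v$ and $w$ swapped. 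The key steps are as follows.

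(i) Parametrize the outcome of $\Delta_S$ from $G$ by the choice $(g,q)$. Here $q$ only matters modulo a finite index, determined by how far $q-(-<g,g>)$ is divisible by $p$. The probability of each class is then $1/\#G$ times a $p$-power.

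(ii) Show that isomorphism classes of triples $(G,H,\text{edge})$ correspond bijectively to isomorphism classes of the symmetric objects above. The stabilizer computations should give weights $\#Aut(G)^{-1}$ and $\#Aut(H)^{-1}$ on the two sides.

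(iii) Check that the leftover factors $\#G$ and $\#H$, together with the $p$-power from the Haar measure on $q$, cancel symmetrically. The relevant index relation is $\#H=\#G\cdot p^{\pm k}$, where $k$ is read off from the valuation data of $g$ and $q$.

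Alternatively, step (ii) might be bypassed using Theorem \ref{thm:maintheorem} and Corollary \ref{thm:askerneltheorem}. Since $SymHaar_{n,n}$ has a law invariant under conjugation by the permutation swapping the last two indices, one could try to derive a time-reversal symmetry from the joint law of $(\Ker(SymHaar_{n,n}),\Ker(SymHaar_{n+1,n+1}))$. This does not directly give reversibility, because it involves stationarity across different $n$ and the chain is not stationary at finite $n$. However, taking $n\to\infty$ with the measure of \cite[Theorem 2]{Woodetal}, I would try the following argument. Bordering $M$ by one row and column and then deleting the \emph{first} row and column yields a matrix with the same law. One then needs to show that the pair (kernel of the $n\times n$ corner, kernel of the $(n+1)\times(n+1)$ matrix) has a law symmetric under reversal in the limit. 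The clean version of this uses the pair (kernel of $SymHaar_{n+1,n+1}$ with its last index deleted, kernel with its first index deleted). I would use this as a heuristic check and rely on the direct counting for the proof.

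The main obstacle is step (ii): computing exactly $\Delta_S(G\to H)$ and the stabilizers. The output depends on $\langle g,\cdot\rangle$ and on the valuation of $q+<g,g>$, so the case split is on whether $(g,1)$ generates a summand that is ``degenerate'' or ``split'' for the pairing. I would first check that the measure $\mu$ is stationary, and then verify detailed balance on small cases such as $G$ cyclic of order $p^a$, before doing the general count.
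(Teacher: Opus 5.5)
Your overall strategy (parametrize transitions by a symmetric object, find an endpoint-reversing involution on it, check that edge weights are invariant) matches the paper's. However, there is a genuine gap. The step that carries all the content, your step (ii), is left open, and the object you propose for it does not work.

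\textbf{Why the rank-two object fails.} In $E'=G\oplus\Q_p^2$, reducing by a single isotropic line leaves a group with a $\Q_p$ factor, not the finite group $H$ or $G$. Also, $G\oplus\Q_p^2$ cannot be ``equivalently'' $H\oplus\Q_p^2$, since their torsion subgroups are $G$ and $H$.

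More fundamentally, a rank-two ambient in which two isotropic lines are reduced \emph{separately} is the picture of the two $n\times n$ principal corners of an $(n+1)\times(n+1)$ matrix. These corners share an $(n-1)\times(n-1)$ block, so they are two children of a common parent, not a parent--child pair. Write $P$ for the transition kernel of $\Delta_S$. Since the two borders are independent given the shared block, the joint law is $\sum_z \pi_{n-1}(z)P(z,x)P(z,y)$, which is symmetric for trivial reasons.

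Your ``clean version'' of the matrix heuristic is exactly this pair, so it carries no information about detailed balance. Your other version (corner versus full matrix, as $n\to\infty$) is just a restatement of reversibility, and no matrix symmetry makes it evident. Checking stationarity of $\mu$ first would not help either, since stationarity does not imply reversibility.

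\textbf{The missing idea.} The symmetric object should have $\Q_p$-rank one and carry \emph{two} pairings. Replace your coefficient $q$ by $-q$ (so now $q\equiv <g,g>$) and rescale the $\Q_p$ coordinate by $q$. Then $v=(g,q)$ and the pairing on $\Q_p$ is $-xy/q$. Let $\widetilde H=v^{\perp}=\{(h,a)\in G\times\Q_p : <g,h>\equiv a\}$. This is an extension of $G$ by $\Z_p$ containing $v_1=(0,1)$ and $v_2=(g,q)$, with $\widetilde H/v_1\cong G$ and $\widetilde H/v_2\cong H$. The pairings pulled back from the two quotients differ by the rank-one form $aa'/q$ in the $\Q_p$-coordinate. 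The involution $(v_1,v_2)\mapsto(v_2,-v_1)$, exchanging the two pairings, swaps the endpoints and sends $q\mapsto -1/q$.

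\textbf{The facts you still need.} Start from the orbit--stabilizer weight $\mu(G)\,\#\mathrm{Aut}(G,<,>)\,|dq|/(\#G\,\#\mathrm{Aut}(G,<,>,g))$. You then need three facts you have not supplied:
\begin{enumerate}
\item $\mathrm{Aut}(G,<,>,g)\cong\mathrm{Aut}(\widetilde H,v_1,v_2,<,>_1,<,>_2)$, which is manifestly $\sigma$-invariant.
\item $\#G/\#H=|q|_p$, which is the precise form of the index relation you anticipate in (iii).
\item The multiplicative Haar measure $|dq|/|q|$ is invariant under $q\mapsto -1/q$.
\end{enumerate}
Together these rewrite the weight as
\[
\frac{\mu(G)\,\#G\,\#\mathrm{Aut}(G,<,>)}{\#G\,\#H\,\#\mathrm{Aut}(\widetilde H,v_1,v_2,<,>_1,<,>_2)}\cdot\frac{|dq|}{|q|},
\]
which becomes $\sigma$-invariant once $\mu\propto 1/(\#G\,\#\mathrm{Aut}(G,<,>))$. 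Keeping $dq$ as a density in this way also makes the discretization in your step (i), with its case split on valuations, unnecessary.
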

\noindent
A short outline of the proof is given in \S \ref{sec: overview reversibility}. The main idea is to construct a weighted graph on $X_S$, such that $\Delta_S$ is a random symmetric walk on this graph.

\begin{remark}
We note that the transitivity and recurrence of the Markov chain follow directly from the statement of \cite[Theorem 2]{Woodetal}, together with \autoref{thm:maintheorem}.
\end{remark}
\noindent
Section \ref{sec:reversibility} can be read independently of Section \ref{sec:markovchain}.

\subsubsection{Other work on $\Ker(SymHaar_{n,n})$}
\label{sec:otherwork}
In an unpublished pre-print, \cite{Maples2}, Maples introduced a Markov chain that models $ker(SymHaar_{n,n}) \otimes \mathbb{F}_p$.
\newline
\newline
\noindent
In recent work, \cite{shen}, Shen and Van Peski study the analogous random process for anti-symmetric and Hermitian matrices. Their approach uses the theory of symmetric polynomials and is in certain complementary to ours. In fact, the combination of both the symmetric function point of view, and the group-theoretic point of view, often proves to be highly insightful, as will be shown, for example, in a forthcoming appendix to \cite{arxiv}.

\section{Outline of the proof of \autoref{thm:maintheorem}}
\label{sec:markovchain}

\subsection{Notation}

We briefly collect here for reference all the notation that we will use in \S \ref{sec:markovchain}:
\newline
\newline $X_S$ denotes the set of groups equipped with a perfect symmetric pairing.
\newline $(G,<,>)$ denotes an element of $X_S$
\newline $g$ denotes an element of $G$.
\newline $\Delta_S$ denotes the generator of a Markov chain defined on $X_S$ in \S\ref{sec:defdeltas}.
\newline $\Ker(M)$ denotes an element of $X_S$ associated to a symmetric matrix $M$ with entries in $\Z_p$.
\newline $\probM$ denotes a random matrix.
\newline $M$ denotes a deterministic matrix.
\newline $M_{k,k}$ denotes the top-left $k \times k$ minor of a given matrix.

\subsection{The group associated with a matrix $M$}
The purpose of this section is to give another perspective on $\Ker(M)$, that will be particularly useful for our proof of \autoref{thm:maintheorem}.
\newline
\newline
A symmetric matrix $M_{n,n}$ with entries in $\Q_p$ determines a symmetric pairing on $\Q_p^n$:
\[
<\, \, , \, \,>: \, \Q_p^n \times \Q_p^n \rightarrow \Q_p / \Z_p 
\]
\begin{equation}
\label{eqn:defpairing}
<v_1,v_2> \defeq v_1^T M v_2 \mod \Z_p
\end{equation}

\begin{remark}
If all the entries of $M$ lie in $\Z_p$, then $\Z_p^n \subset (\Z_p^n)^{\perp}$. If the matrix is non-singular, then the pairing is perfect. Furthermore, in this case,
\begin{equation}
\label{eqn:isokernel}
\Ker(M) \cong (\Z_p^n)^{\perp} \Big/ \Z_p^n
\end{equation}
Indeed, one can verify that the right hand side agrees with the definition of $\Ker$ given in the introduction. (\ref{eqn:isokernel} also appears as \cite[(2)]{BKLPR}. 
\end{remark}

\noindent
It also clear from (\ref{eqn:isokernel}) that the pairing on $\Ker$ is perfect, as claimed in the \autoref{rem:perfect}. In the next three sections, we will prove \autoref{eqn:matrixkernelequation}.

\begin{mysection}
\begin{Claim}
\label{claim:defkerM}
When $M$ is non-singular, the pairing will be perfect and the group will be finite.
\end{Claim}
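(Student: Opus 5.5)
The claim is that for a non-singular symmetric $M\in M_n(\Z_p)$, the pairing $<v_1,v_2>=v_1^TMv_2 \bmod \Z_p$ on $\Q_p^n$ induces a perfect pairing on $(\Z_p^n)^{\perp}/\Z_p^n$, and that this group is finite. The plan is to compute $(\Z_p^n)^{\perp}$ explicitly and then identify the induced pairing with the standard one on a cokernel.

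\textbf{Step 1: compute the orthogonal complement.} A vector $v\in\Q_p^n$ lies in $(\Z_p^n)^{\perp}$ iff $e_i^TMv\in\Z_p$ for all $i$. This says $Mv\in\Z_p^n$, so
\[
(\Z_p^n)^{\perp}=M^{-1}\Z_p^n ,
\]
using that $M$ is invertible over $\Q_p$. Since $M$ has entries in $\Z_p$, we have $\Z_p^n\subset M^{-1}\Z_p^n$.

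\textbf{Step 2: finiteness.} Multiplication by $M$ gives an isomorphism
\[
M^{-1}\Z_p^n/\Z_p^n\;\cong\;\Z_p^n/M\Z_p^n=coker(M).
\]
By Smith normal form, $M=UDV$ with $U,V\in GL_n(\Z_p)$ and $D=\mathrm{diag}(p^{a_1},\dots,p^{a_n})$. Each $a_i$ is finite because $\det M\neq 0$. Hence the cokernel is $\bigoplus_i \Z/p^{a_i}$, which is finite of order $|\det M|_p^{-1}$.

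\textbf{Step 3: the pairing is well defined.} If $v\in M^{-1}\Z_p^n$ and $z\in\Z_p^n$, then $z^TMv\in\Z_p$, so the pairing descends to the quotient. Symmetry is inherited from $M^T=M$.

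\textbf{Step 4: perfectness.} Take $v\in M^{-1}\Z_p^n$ orthogonal to all of $M^{-1}\Z_p^n$. Write $w=M^{-1}y$ with $y\in\Z_p^n$. Then
\[
<v,w>=v^TMM^{-1}y=v^Ty ,
\]
so $v^Ty\in\Z_p$ for all $y\in\Z_p^n$. This forces $v\in\Z_p^n$. Thus the radical of the induced pairing is trivial. Since the group is finite, trivial radical means the induced map $G\to Hom(G,\Q_p/\Z_p)$ is injective between groups of equal order, hence bijective. So the pairing is perfect.

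For comparison with (\ref{eqn:pairingtwo}): under Step 2's identification the pairing becomes $(y,y')\mapsto y^TM^{-1}y'$ on $coker(M)$. Reducing to $D$ via Smith form, it is visibly nondegenerate on each cyclic factor. This is the same perfectness, seen in a second way.

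There is no real obstacle here. The only point needing care is Step 4, specifically the duality fact that $\{v\in\Q_p^n : v^Ty\in\Z_p\ \forall y\in\Z_p^n\}=\Z_p^n$, together with the equal-order argument that upgrades injectivity to perfectness.
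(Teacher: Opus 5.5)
Your proof is correct and is essentially the paper's argument with the details filled in. The paper identifies $\Ker(M)$ with $(\Z_p^n)^{\perp}/\Z_p^n$, calls perfectness a direct consequence of that description, and gets finiteness from Smith normal form; your Steps 1 and 4 (computing $(\Z_p^n)^{\perp}=M^{-1}\Z_p^n$, checking $(M^{-1}\Z_p^n)^{\perp}=\Z_p^n$, then upgrading injectivity by counting) are exactly what lies behind that assertion. One caveat concerns only your closing aside: a Smith decomposition $M=UDV$ does not make the pairing $y^TM^{-1}y'$ diagonal on the cyclic factors, because that would need a congruence $M=U\Lambda U^{T}$ with $\Lambda$ diagonal, which is impossible in general for $p=2$ (e.g.\ $M=\left(\begin{smallmatrix}0&2\\2&0\end{smallmatrix}\right)$ gives a hyperbolic pairing on $(\Z/2)^2$ with all self-pairings zero), but Step 4 does not depend on it.
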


\begin{remark}
This differs slightly from the usual construction, which considers the co-kernel of $M$ \cite{Wood}, but both constructions are equivalent, for non-singular $M$.
\end{remark}
\end{mysection}

\subsection{The relation between $\Ker(M_{n,n})$ and $\Ker(M_{n-1,n-1})$ }
\label{sec:relation}

\begin{definition}
We define 
\[
V_k \defeq \Z_p^{k}  \subset \Q_p^{n}
\]
to be the subgroup of $\Q_p^n$ spanned over $\Z_p$ by the first $k$ coordinate vectors of $\Z_p^k$.
\end{definition}

\noindent
Observe that we have the following inclusions:
\begin{equation}
\label{eqn:nested}
V_1 \subset \cdots \subset V_{n-1} \subset V_n \subset V_n^{\perp}
 \subset V_{n-1}^{\perp} \subset \cdots \subset V_1^{\perp}
\end{equation}

\noindent
Central to our argument is the following generalization of (\ref{eqn:isokernel}).
\begin{lemma}
\label{claim:splittingisomorphism}
Suppose $1 \leq k \leq n$. If $M_{k,k}$ is non-singular, then there is a canonical isomorphism of groups equipped with a symmetric pairing:
\begin{equation}
\label{eqn:splittingisomorphism}
V_k^{\perp} \Big/ V_k \cong \Ker(M_{k,k}) \times \Q_p^{n-k} 
\end{equation}
where the pairing on the right hand side is the sum of the pairing on $\Ker(M_{k,k})$, and some symmetric pairing on $\Q_p^{n-k}$. The pairing on the left hand side is the pairing induced by $M_{n,n}$.
\end{lemma}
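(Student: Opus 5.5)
We work in $\Q_p^n = \Q_p^k \oplus \Q_p^{n-k}$ and write the matrix in block form
\[
M_{n,n} = \begin{mat}{cc} A & B \\ B^T & D \end{mat}, \qquad A = M_{k,k},
\]
with $A$ invertible over $\Q_p$. The plan is to use $A^{-1}$ to perform a symmetric ``Gaussian elimination'' over $\Q_p$, which splits $\Q_p^n$ orthogonally as $\Q_p^k \perp W$. Here
\[
W \defeq \left\{ \left(-A^{-1}B y,\; y\right) : y \in \Q_p^{n-k} \right\}.
\]
A direct computation gives $(x,0)^T M (-A^{-1}By, y) = x^T(-By + By) = 0$, so $W$ is exactly orthogonal to $\Q_p^k \times 0$ for the $\Q_p$-valued bilinear form, not merely modulo $\Z_p$. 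The form restricted to $W$ is given by the Schur complement $D - B^T A^{-1} B$, which is a symmetric matrix with entries in $\Q_p$. This will be the ``some symmetric pairing on $\Q_p^{n-k}$'', after identifying $W \cong \Q_p^{n-k}$ via the projection to $y$.

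Next we compute $V_k^{\perp}$ in these coordinates. Write a vector as $v = (x,0) + w$ with $w = (-A^{-1}By, y) \in W$. For $u \in V_k = \Z_p^k \times 0$, orthogonality gives $\langle u, v\rangle = u^T A x \bmod \Z_p$, which is independent of $w$. Hence
\[
V_k^{\perp} = \{x : Ax \in \Z_p^k\} \oplus W = (\Z_p^k)^{\perp_A} \oplus W,
\]
where $\perp_A$ denotes the orthogonal complement inside $\Q_p^k$ with respect to the pairing given by $A$. Since $V_k \subset \Q_p^k \times 0$, it is contained in the first summand. Therefore
\[
V_k^{\perp}/V_k \cong \big((\Z_p^k)^{\perp_A}/\Z_p^k\big) \oplus W,
\]
and the pairing decomposes as an orthogonal sum because the two summands are exactly orthogonal. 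By (\ref{eqn:isokernel}), applied to the non-singular symmetric matrix $A = M_{k,k}$ in dimension $k$, the first summand is $\Ker(M_{k,k})$ with its pairing. The second summand is $W \cong \Q_p^{n-k}$, carrying the pairing induced by the Schur complement.

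The decomposition is canonical, since $W$ is the full orthogonal complement of $\Q_p^k \times 0$ in $\Q_p^n$, which is determined by $M$.

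The main point needing care is the direct sum decomposition of $V_k^{\perp}$. The naive approach of splitting $V_k^\perp$ along the coordinates $\Q_p^k \oplus \Q_p^{n-k}$ fails, because the cross term $B$ couples the two coordinates modulo $\Z_p$. One must instead split along $\Q_p^k \oplus W$, and check that this splitting is an honest direct sum of $\Z_p$-modules compatible with quotienting by $V_k$.

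This check amounts to the following. Every $v \in \Q_p^n$ decomposes uniquely as $(x,0) + w$, since $\Q_p^n = \Q_p^k \oplus W$ as vector spaces. The condition $v \in V_k^{\perp}$ constrains only $x$. Finally, $V_k$ lies entirely in the first factor, so taking the quotient by $V_k$ only affects that factor.
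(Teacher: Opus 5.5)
Your proof is correct, and it takes a genuinely different, more explicit route than the paper.

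The paper never writes the splitting down. It shows that projection onto the last $n-k$ coordinates maps $V_k^\perp/V_k$ onto $\Q_p^{n-k}$, using $(V_k\otimes\Q_p)\cap(V_k\otimes\Q_p)^\perp=0$, and identifies the kernel with $ker(M_{k,k})$. It then invokes the canonical splitting of an extension of $\Q_p^{n-k}$ by a finite group. Orthogonality of the two factors comes for free from Claim~\ref{claim:pairing}: there is no nonzero homomorphism from a finite group to the dual of $\Q_p^{n-k}$. Claim~\ref{claim:pairingrestriction} then identifies the pairing on the finite factor.

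You instead split $\Q_p^n=\Q_p^k\oplus W$ by block Gaussian elimination, with $W$ the orthogonal complement of $\Q_p^k\times 0$ for the $\Q_p$-valued form. Orthogonality then holds exactly, not only modulo $\Z_p$, and the decomposition of $V_k^\perp$ is a direct computation that needs no extension theory.

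What your route buys is an explicit formula for the unspecified pairing on $\Q_p^{n-k}$: it is the Schur complement $D-B^TA^{-1}B$. For $k=n-1$ (with $B=b$ a column and $D=d$ a scalar) it also gives the data of Lemma~\ref{lem:recipe} explicitly: $g=A^{-1}b \bmod \Z_p^{n-1}$ and $q=d-b^TA^{-1}b$. From these, the compatibility $q\equiv-\langle g,g\rangle \bmod \Z_p$ and both parts of Theorem~\ref{thm:distribution} can be read off immediately.

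The paper's route is shorter and explains canonicity purely through the group structure. Since $\mathrm{Hom}(\Q_p^{n-k},ker(M_{k,k}))=0$, the splitting is unique. This also confirms that your $W$-splitting, whose canonicity you justify via the $\Q_p$-valued form, coincides with the paper's.
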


\begin{proof}
We will defer the proof of \autoref{claim:splittingisomorphism} to \S \ref{sec:splittingisomorphism}.
\end{proof}

\begin{observation}
We make the simple observation that $V_n/V_{n-1}$ is isomorphic to $\Z_p$. $V_n/V_{n-1} \cong \Z_p$ is generated by the $n^{th}$ coordinate vector in $\Z_p^n$:
\[
\begin{array}{ccccccc}
e_n \defeq [ & 0 & 0 & \hdots   &  0 &  1 & ] \\
\end{array}
\]
\end{observation}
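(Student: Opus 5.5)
The plan is to exhibit the isomorphism $V_n/V_{n-1}\cong\Z_p$ directly, using the last-coordinate projection. No earlier result beyond the definition of $V_k$ is needed.

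Consider the $\Z_p$-linear map
\[
\pi: V_n=\Z_p^n \longrightarrow \Z_p, \qquad (a_1,\dots,a_n)\mapsto a_n .
\]
First, $\pi$ is surjective, since $\pi(a e_n)=a$ for every $a\in\Z_p$. Second, its kernel consists exactly of the vectors whose last coordinate vanishes. These are the $\Z_p$-combinations of $e_1,\dots,e_{n-1}$, which by definition span $V_{n-1}$. So $\ker\pi=V_{n-1}$.

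The first isomorphism theorem for $\Z_p$-modules then gives $V_n/V_{n-1}\cong\Z_p$. Moreover, $\pi(e_n)=1$ and $1$ generates $\Z_p$. Hence the class of $e_n$ generates the quotient, and every element of $V_n/V_{n-1}$ equals $a\,\overline{e_n}$ for a unique $a\in\Z_p$.

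There is no real obstacle here. The only point to keep straight is that $V_k$ is defined as the $\Z_p$-span of the first $k$ coordinate vectors inside $\Q_p^n$, not inside $\Q_p^k$. This ensures $V_{n-1}\subset V_n$ and makes the kernel computation above valid.
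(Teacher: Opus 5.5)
Your proof is correct: the last-coordinate projection $\Z_p^n\to\Z_p$ is surjective with kernel $V_{n-1}$ and sends $e_n$ to $1$. This is exactly the content the paper leaves implicit, since it records the statement as a simple observation without giving a proof.
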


\noindent
As an element of $X_S$, $\Ker(M_{n,n}) = V_n^{\perp}/V_{n}$ can be expressed as:
\begin{equation}
\label{eqn:envnisomorphism}
V_n^{\perp}/V_{n} \cong e_n^{\perp} / e_n
\hspace{0.1in} \text{ in} \hspace{0.1in} V_{n-1}^{\perp}/V_{n-1}  
\end{equation}

\noindent
Recall that, by \autoref{claim:splittingisomorphism}, $V_{n-1}^{\perp}/V_{n-1} \cong \Ker(M_{n-1,n-1}) \times \Q_p$.
We will now combine  \ref{eqn:splittingisomorphism} and \ref{eqn:envnisomorphism} in order to relate $ker(M_{n-1,n-1})$ and $ker(M_{n,n})$. For this, we need to understand the image of $e_n$ in $\Ker(M_{n-1,n-1}) \times \Q_p$.
\paragraph{The image of $e_n$ in $\Ker(M_{n-1,n-1}) \times \Q_p$}
There are two restrictions on the image of $e_n$, given below:

\begin{itemize}
\item The image of $e_n$ in $ker(M_{n-1,n-1}) \times \Q_p$ must be of the form $(g,1)$ for some $g \in ker(M_{n-1,n-1})$.
\begin{proof}
This is the content of \autoref{extrastatement} below.
\end{proof}

\item $(g,1)$ must satisfy $(g,1) \in (g,1)^{\perp}$, which implies the compatibility condition:
\begin{equation}
\label{eqn:compatibility}
\Big<g,g\Big>_{M_{n-1,n-1}} + \Big<1,1\Big>_{\Q_p} = 0
\end{equation}
\end{itemize}

\paragraph{Recipe to get $\Ker(M_{n,n})$ from $\Ker(M_{n-1,n-1})$}

We summarize:

\begin{lemma}
\label{lem:recipe}
$M_{n,n}$ determines the following data, when $M_{n-1,n-1}$ is non-singular.
\begin{itemize}
\item $\Ker(M_{n-1,n-1})$,
\item An element $g \in ker(M_{n-1,n-1})$
\item A pairing on $\Q_p$ that satisfies (\ref{eqn:compatibility}).
\end{itemize}
Given this data, 
\[
\Ker(M_{n,n}) \cong (g,1)^{\perp}/(g,1) \hspace{0.1in}\text{ in }
\hspace{0.1in} \Ker(M_{n-1,n-1}) \times \Q_p
\]

\end{lemma}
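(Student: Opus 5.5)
The natural route is to work entirely inside the ambient space $\Q_p^n$ with the pairing (\ref{eqn:defpairing}) induced by $M_{n,n}$, and to pass through the isomorphism of \autoref{claim:splittingisomorphism} with $k=n-1$. First, since $M_{n,n}$ is assumed non-singular (as in \autoref{thm:maintheorem}), (\ref{eqn:isokernel}) gives $\Ker(M_{n,n}) \cong V_n^{\perp}/V_n$ as groups with pairing. By the chain of inclusions (\ref{eqn:nested}), both $V_n$ and $V_n^{\perp}$ lie between $V_{n-1}$ and $V_{n-1}^{\perp}$. So we may view $V_n/V_{n-1}$ and $V_n^{\perp}/V_{n-1}$ as subgroups of $W \defeq V_{n-1}^{\perp}/V_{n-1}$, which carries the well-defined induced pairing. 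Indeed, $V_{n-1}$ pairs to zero with $V_{n-1}^{\perp}$, so the pairing descends.

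The first step is to identify these subgroups intrinsically inside $W$. By the Observation, $V_n/V_{n-1}$ is the cyclic $\Z_p$-module generated by the class $\bar e_n$ of $e_n$. Next I check that $V_n^{\perp}/V_{n-1}$ equals $\bar e_n^{\perp}$ computed inside $W$. An element $v\in V_{n-1}^{\perp}$ is orthogonal to all of $V_n = V_{n-1}+\Z_p e_n$ if and only if $\langle v,e_n\rangle=0$. This uses bilinearity over $\Z_p$: $\langle v, a e_n\rangle = a\langle v,e_n\rangle$ for $a\in\Z_p$. Hence
\[
\Ker(M_{n,n})\cong V_n^{\perp}/V_n \cong \bar e_n^{\perp}\big/\langle \bar e_n\rangle \quad\text{in } W,
\]
compatibly with pairings, which is (\ref{eqn:envnisomorphism}).

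The second step is to transport this along the canonical isomorphism $W\cong \Ker(M_{n-1,n-1})\times\Q_p$ of \autoref{claim:splittingisomorphism}. Since that isomorphism preserves pairings, it carries $\bar e_n^{\perp}/\bar e_n$ to $x^{\perp}/\langle x\rangle$, where $x$ is the image of $\bar e_n$. The data listed in the lemma are then read off as follows.
\begin{itemize}
\item $\Ker(M_{n-1,n-1})$ itself.
\item The pairing on the $\Q_p$ factor, which the splitting lemma produces from $M_{n,n}$.
\item The element $g$, defined through $x=(g,1)$.
\end{itemize}
That $x$ has the form $(g,1)$ with $g\in\ker(M_{n-1,n-1})$ is the forward-referenced \autoref{extrastatement}. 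I would justify it via the explicit splitting: the projection of $W$ onto $\Q_p$ should be essentially the last coordinate, suitably normalized, so $e_n\mapsto 1$. Finally, $x\in x^{\perp}$ because $e_n\in V_n\subset V_n^{\perp}$, so $\langle e_n,e_n\rangle\in\Z_p$. Expanding $\langle (g,1),(g,1)\rangle$ using the orthogonal sum decomposition then yields (\ref{eqn:compatibility}).

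The main obstacle is the normalization of the second coordinate, i.e. \autoref{extrastatement}, which depends on the explicit form of the isomorphism of \autoref{claim:splittingisomorphism}. I would first try to construct the $\Q_p$ factor as the span of $e_n - M_{n-1,n-1}^{-1}m$, where $m$ is the top part of the last column of $M_{n,n}$. This vector is orthogonal to $\Q_p^{n-1}$ for the bilinear form on $\Q_p^n$ itself, hence lies in $V_{n-1}^{\perp}$ and is orthogonal to the complementary summand coming from $\Q_p^{n-1}$. The decomposition $e_n = M_{n-1,n-1}^{-1}m + (e_n - M_{n-1,n-1}^{-1}m)$ then exhibits the $\Q_p$-component of $\bar e_n$ as exactly $1$. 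The other component is $M_{n-1,n-1}^{-1}m$, whose class modulo $V_{n-1}$ is the element $g$. The remaining verification is that this class lies in the $\Ker(M_{n-1,n-1})$ summand as identified via (\ref{eqn:isokernel}) for $M_{n-1,n-1}$, and that the splitting isomorphism is defined exactly this way.
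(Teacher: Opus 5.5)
Your proposal is correct and is essentially the paper's argument. You identify $\Ker(M_{n,n})=V_n^{\perp}/V_n$ with $\bar e_n^{\perp}/\bar e_n$ inside $V_{n-1}^{\perp}/V_{n-1}$ as in (\ref{eqn:envnisomorphism}), transport along \autoref{claim:splittingisomorphism}, use \autoref{extrastatement} (the $\Q_p$-projection is the last coordinate) to write the image of $e_n$ as $(g,1)$, and read off (\ref{eqn:compatibility}) from the isotropy of $e_n$.

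Your explicit complement $\Q_p\cdot(e_n-M_{n-1,n-1}^{-1}m)$ is a concrete version of the paper's canonical splitting. The remaining verification you flagged is immediate, because $\mathrm{Hom}(\Q_p,\ker(M_{n-1,n-1}))=0$ forces the splitting to be unique. As a bonus, it identifies $g$ as $M_{n-1,n-1}^{-1}m \bmod \Z_p^{n-1}$ and the $\Q_p$-pairing as the Schur complement of $M_{n-1,n-1}$ in $M_{n,n}$.
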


\subsection{Deduction of \autoref{thm:maintheorem}}

\begin{recall} In order to prove \autoref{thm:maintheorem}, we need to understand the distribution of \begin{equation}
\label{eqn: random matrix}
\Ker
 \begin{mat}{ccc|c}
   & & & *\\
   & M& &\vdots\\
   & & & *\\ \hline
   * & \hdots & * & *
   \end{mat},
\end{equation}
given a fixed $M$. We have used the same notation as in the statement of \autoref{thm:maintheorem}. 
\end{recall}
\noindent
We achieve this by applying \autoref{lem:recipe} to the matrix:
\[
 \begin{mat}{ccc|c}
   & & & *\\
   & M& &\vdots\\
   & & & *\\ \hline
   * & \hdots & * & *
   \end{mat}
\]
This determines a random element $g \in ker(M)$ and a random pairing on $\Q_p$.
We identify the pairing on $\Q_p$ with an element $q \in \Q_p$ via (\ref{eqn:qpairing}). Hence, to determine the distribution of \ref{eqn: random matrix}, it is sufficient to determine the distribution of the pair $(g,q)$.
\newline
\newline
\noindent
The following theorem gives sufficient information about the distribution of $g$ and $q$ to conclude \autoref{thm:maintheorem}.
\begin{theorem}
\label{thm:distribution}
In the case of the matrix \[ \begin{mat}{ccc|c}
   & & & *\\
   & M& &\vdots\\
   & & & *\\ \hline
   * & \hdots & * & *
   \end{mat},\]
   the joint distribution of $g$ and $q$ satisfies the following properties:
\begin{itemize}
\item[(A)] $g$ is a uniformly random element of $ker(M)$.
\item[(B)] The joint distribution of $g$ and $q$ is invariant by the operation
\[
q \rightarrow q+a
\]
for any $a \in \Z_p$,
\end{itemize}
\end{theorem}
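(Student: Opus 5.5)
The plan is to make the data $(g,q)$ of \autoref{lem:recipe} completely explicit in terms of the new random entries, and then read off (A) and (B) directly. Write the bordered matrix as
\[
N=\begin{mat}{c|c} M & b\\ \hline b^T & c\end{mat},
\]
where $M$ is the fixed non-singular symmetric $(n-1)\times(n-1)$ matrix, $b\in\Z_p^{n-1}$ is a Haar random column, and $c\in\Z_p$ is Haar random and independent of $b$. The pairing on $\Q_p^n$ is the one induced by $N$, and $V_{n-1}=\Z_p^{n-1}\times\{0\}$.

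\textbf{Step 1: the splitting.} I first describe the splitting of \autoref{claim:splittingisomorphism} concretely for $k=n-1$. A vector $x\in\Q_p^n$ lies in $V_{n-1}^{\perp}$ exactly when the first $n-1$ coordinates of $Nx$ lie in $\Z_p$. Put
\[
w\defeq(-M^{-1}b,\,1)\in\Q_p^n .
\]
Then
\[
Nw=\bigl(0,\;c-b^TM^{-1}b\bigr),
\]
so every multiple $tw$ with $t\in\Q_p$ lies in $V_{n-1}^{\perp}$. Moreover, for $v\in M^{-1}\Z_p^{n-1}$ we have $(v,0)^TN(tw)=0$. Hence
\[
V_{n-1}^{\perp}/V_{n-1}\;=\;\bigl(M^{-1}\Z_p^{n-1}/\Z_p^{n-1}\bigr)\times\Q_p w
\]
is an orthogonal decomposition. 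The first factor is $\Ker(M)$ via (\ref{eqn:isokernel}), and identifying it with $ker(M)\subset(\Q_p/\Z_p)^{n-1}$ gives exactly $M^{-1}\Z_p^{n-1}/\Z_p^{n-1}$. The pairing on the $\Q_p$ factor is
\[
\langle t_1 w,t_2 w\rangle=t_1t_2\,q,\qquad q\defeq c-b^TM^{-1}b .
\]
I expect the only real subtlety of the whole argument to be here: one must check that this decomposition is the one produced in the deferred proof of \autoref{claim:splittingisomorphism}, or at least that \autoref{lem:recipe} holds for it. Since \autoref{lem:recipe} only uses an orthogonal splitting $\Ker(M)\times\Q_p$ in which $e_n$ has second coordinate $1$, I will simply verify these properties for the explicit splitting above rather than depend on the deferred proof.

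\textbf{Step 2: the image of $e_n$.} Next I decompose $e_n$ in this splitting:
\[
e_n=(M^{-1}b,0)+w .
\]
So the image of $e_n$ is $(g,1)$, where $g$ is the class of $M^{-1}b$ in $M^{-1}\Z_p^{n-1}/\Z_p^{n-1}\cong ker(M)$, and the pairing on $\Q_p$ is given by $q=c-b^TM^{-1}b$. As a sanity check, the compatibility condition (\ref{eqn:compatibility}) holds automatically: $\langle g,g\rangle=b^TM^{-1}b \bmod \Z_p$, so $\langle g,g\rangle+q\equiv c\equiv 0 \bmod \Z_p$.

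\textbf{Step 3: the distributions.}
\begin{itemize}
\item[(A)] The map $b\mapsto M^{-1}b$ is a bijection from $\Z_p^{n-1}$ onto $M^{-1}\Z_p^{n-1}$ that carries Haar measure to Haar measure. Pushing forward to the finite quotient $M^{-1}\Z_p^{n-1}/\Z_p^{n-1}$ therefore gives the uniform measure, so $g$ is uniform on $ker(M)$.
\item[(B)] The pair $(g,q)$ is the image of $(b,c)$ under
\[
(b,c)\mapsto\bigl([M^{-1}b],\,c-b^TM^{-1}b\bigr).
\]
The substitution $c\mapsto c+a$ with $a\in\Z_p$ preserves the product Haar measure on $(b,c)$, since $c$ is Haar on $\Z_p$ and independent of $b$. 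It leaves $g$ unchanged and replaces $q$ by $q+a$. Hence the joint law of $(g,q)$ is invariant under $q\mapsto q+a$.
\end{itemize}
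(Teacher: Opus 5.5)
Your proof is correct. It reaches the same conclusion as the paper, but by an explicit coordinate computation rather than the paper's duality argument.

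\textbf{The paper's route.} The paper never writes down the splitting of Lemma~\ref{claim:splittingisomorphism}. Instead it works with the dual vector $e_n^{\vee}=Ne_n=(b,c)$, a Haar random character. It uses the identification $(g,1)^{\vee}=(g^{\vee},q)$, so that $(g^{\vee},q)$ is the pull-back of $e_n^{\vee}$ to $V_{n-1}^{\perp}/V_{n-1}$. Property (A) then follows because $ker(M)$ is a \emph{fixed} subgroup of $\Q_p^n/\Z_p^n$, and a Haar random character restricts uniformly to a fixed subgroup. Property (B) follows because replacing $c$ by $c+a$ leaves $V_{n-1}^{\perp}$ unchanged, and the character $(0,\dots,0,a)$ pulls back to $(0,a)$.

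\textbf{Your route.} You compute everything explicitly: the complement $\Q_p w$ with $w=(-M^{-1}b,1)$, the class $g=[M^{-1}b]$, and the Schur complement $q=c-b^TM^{-1}b$.

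\textbf{Comparison.} The mechanism is the same in both. The pair $(g,q)$ is a function of the last column alone. The element $g$ depends only on $b$, through a surjective homomorphism onto $ker(M)$. The scalar $q$ is $c$ plus a function of $b$. Your version makes this fully transparent and turns the compatibility condition (\ref{eqn:compatibility}) into a one-line check. It also avoids the pull-back and duality bookkeeping. The paper's version is coordinate-free and reuses the picture that drives the rest of \S\ref{sec:markovchain}.

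\textbf{The subtlety you flagged in Step 1.} This is not actually an issue. The exact sequence $0\to ker(M)\to V_{n-1}^{\perp}/V_{n-1}\to\Q_p\to 0$ has a \emph{unique} splitting. Two sections differ by a homomorphism $\Q_p\to ker(M)$, and the only such homomorphism is $0$, since $\Q_p$ is divisible and $ker(M)$ is finite. Your $\Q_p w$ maps isomorphically onto $\Q_p$ under the last-coordinate projection. So it is exactly the canonical complement used in the paper (equivalently, the maximal divisible subgroup), and your $(g,q)$ are literally the paper's $(g,q)$.
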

\noindent
We defer the proof to \S \ref{sec: distribution theorem proof}.

\begin{corollary}
\autoref{thm:maintheorem} follows.
\end{corollary}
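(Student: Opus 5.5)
We deduce \autoref{thm:maintheorem} by comparing two random triples $(\Ker(M), g, q)$: the one produced by $\Delta_S$, and the one produced by applying \autoref{lem:recipe} to the bordered matrix. The argument has three steps.

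First, we write out the law of the triple underlying $\Delta_S[\Ker(M)]$. Here $G=\Ker(M)$ is fixed, and $g$ is uniform on $G$. Conditionally on $g$, the parameter $q$ is Haar-uniform on the coset $-\langle g,g\rangle + \Z_p$ of $\Z_p$ in $\Q_p$; this is the compact set described in the remark following the definition of $\Delta_S$. The output of $\Delta_S$ is then $(g,1)^{\perp}/(g,1)$ inside $G\times\Q_p$, with the pairing on $\Q_p$ given by $q$.

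Second, we turn to the bordered matrix. \autoref{lem:recipe} (which uses that $M=M_{n-1,n-1}$ is non-singular) shows that its $\Ker$ equals $(g,1)^{\perp}/(g,1)$ in $\Ker(M)\times\Q_p$, where $(g,q)$ is the random pair extracted from the matrix. Two constraints on this pair are automatic:
\begin{itemize}
\item By the compatibility condition (\ref{eqn:compatibility}), $q \equiv -\langle g,g\rangle \pmod{\Z_p}$, so conditionally on $g$ the variable $q$ lives on the same coset $-\langle g,g\rangle+\Z_p$.
\item The isomorphism class of $(g,1)^{\perp}/(g,1)$ is a measurable function $F(g,q)$ of the pair. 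The ambient space $\Ker(M)\times\Q_p$ is fixed up to the choice of $q$, since the pairing on $\Ker(M)$ is fixed and there are no cross terms.
\end{itemize}
It therefore suffices to show that the pair $(g,q)$ has the same law in both constructions.

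Third, we compare the two laws using \autoref{thm:distribution}.
\begin{itemize}
\item Part (A) says $g$ is uniform on $ker(M)$, which matches the first step of $\Delta_S$.
\item For the conditional law of $q$ given $g$: by part (B), the joint law is invariant under $q\mapsto q+a$ for every $a\in\Z_p$. Conditioning on $g$ preserves this, since the translation does not touch $g$ and $g$ takes finitely many values, each with positive probability. So the conditional law of $q$ is a probability measure on the coset $-\langle g,g\rangle+\Z_p$ that is invariant under all translations by $\Z_p$.
\item Translating the coset to $\Z_p$, this is a translation-invariant probability measure on the compact group $\Z_p$. By uniqueness of Haar measure, it is the Haar measure.
\end{itemize}
Hence $(g,q)$ has exactly the law used in defining $\Delta_S$, and applying $F$ gives $\Delta_S[\Ker(M)] = \Ker$ of the bordered matrix in distribution.

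The main obstacle is bookkeeping rather than substance: the identification in \autoref{lem:recipe} depends on the canonical splitting from \autoref{claim:splittingisomorphism}. We must check that this splitting identifies the $\Ker(M)$ factor with the fixed object $\Ker(M)$ independently of the random border. Otherwise "$g$ uniform in $ker(M)$" would be relative to a random identification. Since the splitting is canonical and $V_{n-1}$ together with the restricted pairing depend only on $M$, this should hold. We also need that $\Ker$ of the bordered matrix is finite, i.e.\ the bordered matrix is non-singular, almost surely; this follows because $q$ has a continuous distribution.
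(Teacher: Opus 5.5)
Your proposal is correct and follows the paper's argument. You combine parts (A) and (B) of \autoref{thm:distribution} with the compatibility condition $q \equiv -\langle g,g\rangle \pmod{\Z_p}$ to pin down the law of $(g,q)$, and then apply the last line of \autoref{lem:recipe}. Your step of conditioning on $g$ and invoking uniqueness of Haar measure makes explicit the paper's one-line claim that these three conditions determine a unique distribution. Your bookkeeping remarks also hold: the $\ker(M)$ factor is a fixed subgroup, as the paper notes in its proof of \autoref{thm:distribution}, and the bordered matrix is almost surely non-singular because its determinant equals $\det(M)\,q$.
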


\begin{proof}(of Corollary)
Indeed, \autoref{thm:maintheorem} follows because there is a unique distribution on pairs $(g,q)$ that satisfies conditions $A$, $B$, and the condition:
\[
q \equiv -<g,g> \mod \Z_p.
\]
Under this distribution, $g$ is uniformly random and $q$ is a uniformly random lift of $-<g,g>$ to $\Q_p$. \autoref{thm:maintheorem} now follows by the last line of \autoref{lem:recipe}.
\end{proof}

\noindent
To complete the proof of \autoref{thm:maintheorem} it remains to show the following unproven statements: \autoref{claim:splittingisomorphism}, \autoref{extrastatement} and \autoref{thm:distribution}.

\subsection{Proofs of unproven claims}
\subsubsection{Proof of \autoref{claim:splittingisomorphism}}
\label{sec:splittingisomorphism}

\paragraph{The structure of $V_k^{\perp}/V_k$ as a group.}

\begin{proof} 
First, we show that the projection if $V_k^{\perp}$ onto the last $n-k$ coordinates,
\[
V_k^{\perp} \rightarrow \Q_p^{n-k},
\]
is surjective. To do this, we will prove that
\[
(V_k \otimes \Q_p)^{\perp} \rightarrow \Q_p^{n-k}
\]
is surjective, a stronger statement. As $M_{k,k}$ is non-singular, on the left we have a vector space of dimension $n-k$. Thus, to prove surjectivity, we need to prove that the kernel is trivial. But the kernel must lie in
\[
V_k \otimes \Q_p \cap (V_k \otimes \Q_p)^{\perp} = \emptyset
\]
\newline
\newline
It follows that 
\[
V_k^{\perp}/V_k \rightarrow \Q_p^{n-k}
\]
is also surjective. The kernel of this map is 
\[
V_k \otimes \Q_p \cap V_k^{\perp}  \cong ker(M_{k,k}).
\] Thus, we get an exact sequence:
\[
0\rightarrow ker(M_{k,k}) \rightarrow V_k^{\perp}/V_k \rightarrow \Q_p^{n-k} \rightarrow 0
\]
Because $ker(M_{k,k})$ is finite, the sequence splits canonically. 
\end{proof}

\paragraph{The pairing on $V_k^{\perp}/V_k$}
\begin{Claim}
\label{claim:pairing}
Any bilinear pairing on 
\[
ker(M_{k,k}) \times \Q_p^{n-k}
\]
must split as a sum of a pairing on the left factor and a pairing on the right factor. 
\end{Claim}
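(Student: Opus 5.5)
The plan is to show that the cross terms of any such pairing vanish, because there are no nonzero homomorphisms from a finite $p$-group into $\Q_p/\Z_p$-valued maps on $\Q_p$, or vice versa. The pairing takes values in $\Q_p/\Z_p$ (as in (\ref{eqn:defpairing})), and is symmetric in our application, though only bilinearity is needed.

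Write $K = ker(M_{k,k})$ and $W = \Q_p^{n-k}$. For $(x,u),(y,v) \in K \times W$, bilinearity gives
\[
<(x,u),(y,v)> = <x,y> + <x,v> + <u,y> + <u,v>,
\]
where each term means the pairing of the corresponding elements embedded via $x \mapsto (x,0)$ and $u \mapsto (0,u)$. So it suffices to show that the cross terms $<x,v>$ and $<u,y>$ vanish identically. Once that is done, the pairing is the sum of its restriction to $K$ and its restriction to $W$, which is the claim.

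To kill the cross term $<x,v>$, fix $x \in K$. Since $K$ is a finite abelian $p$-group, there is an $N$ with $p^N x = 0$. The map $v \mapsto <x,v>$ is a group homomorphism $W \to \Q_p/\Z_p$. Because $W = \Q_p^{n-k}$ is divisible, every $v$ can be written as $v = p^N w$, and then
\[
<x,v> = <x,p^N w> = <p^N x, w> = 0.
\]
The term $<u,y>$ vanishes by the same argument with the roles of the two arguments swapped. Equivalently, $Hom(K, Hom(W,\Q_p/\Z_p)) = 0$, because $K$ is torsion while $Hom(\Q_p,\Q_p/\Z_p)\cong \Q_p$ is torsion-free; this matches the remark in the introduction.

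No step is a real obstacle. The only point to be careful about is that the decomposition $K \times W$ used here is the canonical splitting produced in the proof of \autoref{claim:splittingisomorphism}. The argument itself uses nothing beyond bilinearity, finiteness (torsion) of $K$, and divisibility of $W$, so it applies to every bilinear pairing, as the claim asserts. In particular, it applies to the pairing on $V_k^{\perp}/V_k$ induced by $M_{n,n}$, and its restriction to $K$ is the pairing (\ref{eqn:pairing}) on $\Ker(M_{k,k})$, which completes \autoref{claim:splittingisomorphism}.
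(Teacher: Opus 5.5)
Your proof is correct and is essentially the paper's argument. The paper observes that a cross term is a homomorphism from the finite group $ker(M_{k,k})$ into the dual of $\Q_p^{n-k}$, which is torsion-free ($\cong \Q_p^{n-k}$) and so receives only the zero map; your elementwise computation using divisibility of $\Q_p^{n-k}$ is the same fact in dual form, and treating both cross terms explicitly also covers non-symmetric pairings, as the claim literally asserts.
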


\begin{proof}
There can be no cross-terms, as the only homomorphism from $ker(M_{k,k})$ to the dual of $\Q_p^{n-k}$ is trivial. Indeed, $ker(M_{k,k})$ is finite while the dual of $\Q_p^{n-k}$ is isomorphic to $\Q_p^{n-k}$.
\end{proof}

It remains to show the following
\begin{Claim}
\label{claim:pairingrestriction}
The pairing (\ref{eqn:defpairing}) induced by $M_{n,n}$ on 
\begin{equation}
\label{eqn:anothersplitting}
V^{\perp}_k / V_{k} \cong ker(M_{k,k}) \times \Q_p^{n-k}
\end{equation}
restricts to the pairing induced by $M_{k,k}$ on $ker(M_{k,k})$.
\end{Claim}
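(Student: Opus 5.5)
The plan is to make the embedding $ker(M_{k,k}) \hookrightarrow V_k^{\perp}/V_k$ from the proof of \autoref{claim:splittingisomorphism} explicit, and then compute the pairing on both sides directly.

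First, identify $ker(M_{k,k})$ with $(V_k \otimes \Q_p) \cap V_k^{\perp}$ modulo $V_k$. A vector $v \in V_k\otimes\Q_p$ has the form $v=(x,0)$ with $x \in \Q_p^k$. For $w=(y,0)\in V_k$ we have
\[
v^T M_{n,n} w = x^T M_{k,k} y .
\]
So $v \in V_k^{\perp}$ exactly when $x^T M_{k,k} \in \Z_p^k$, i.e. $M_{k,k}x \in \Z_p^k$ by symmetry. Hence $x \bmod \Z_p^k$ is an element of $ker\big((\Q_p/\Z_p)^k \xrightarrow{M_{k,k}} (\Q_p/\Z_p)^k\big)$. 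Conversely, every element of this kernel lifts to such an $x$, and $x$ is determined modulo $\Z_p^k$, i.e. modulo $V_k$. This gives the isomorphism $(V_k\otimes\Q_p)\cap V_k^{\perp}/V_k \cong ker(M_{k,k})$. This is precisely the kernel of the projection onto the last $n-k$ coordinates used in the proof of \autoref{claim:splittingisomorphism}, so it is the copy of $ker(M_{k,k})$ appearing in the splitting (\ref{eqn:anothersplitting}).

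Second, compute the induced pairing on this subgroup. For $v_1=(x_1,0)$ and $v_2=(x_2,0)$ in $(V_k\otimes\Q_p)\cap V_k^{\perp}$, the pairing induced by $M_{n,n}$ is
\[
v_1^T M_{n,n} v_2 = x_1^T M_{k,k} x_2 \mod \Z_p ,
\]
because the zero blocks kill every entry of $M_{n,n}$ outside the top-left $k\times k$ corner. This is literally the pairing (\ref{eqn:pairing}) attached to $M_{k,k}$, evaluated on the representatives $x_1,x_2$. It is well defined modulo $\Z_p^k$: changing $x_1$ by $y\in\Z_p^k$ changes the value by $y^T M_{k,k}x_2 \in \Z_p$, since $M_{k,k}x_2\in\Z_p^k$. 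Symmetrically the same holds in the second argument.

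The only subtle point is checking that the restriction to this subgroup is really what the claim means by the pairing on the factor $ker(M_{k,k})$. By \autoref{claim:pairing}, the pairing on $ker(M_{k,k})\times\Q_p^{n-k}$ is a sum of pairings on the two factors with no cross-terms. So its component on $ker(M_{k,k})$ equals its restriction to $ker(M_{k,k})\times\{0\}$. Moreover, the canonical splitting identifies $ker(M_{k,k})\times\{0\}$ with the kernel of the projection, which is the subgroup computed above. Combining these two facts with the computation in the second step proves the claim.
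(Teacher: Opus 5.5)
Your proof is correct and takes the same approach as the paper: the copy of $ker(M_{k,k})$ inside $V_k^{\perp}/V_k$ consists of classes of vectors supported on the first $k$ coordinates, and on such vectors the pairing induced by $M_{n,n}$ only involves the top-left block $M_{k,k}$. You spell out what the paper leaves implicit, namely the identification $((V_k\otimes\Q_p)\cap V_k^{\perp})/V_k \cong ker(M_{k,k})$ and the well-definedness modulo $\Z_p^k$. Your appeal to the no-cross-terms claim is harmless but not needed, since restricting to $ker(M_{k,k})\times\{0\}$ already gives the component on that factor.
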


\begin{proof}
The image of
\[
ker(M_{k,k}) \hookrightarrow V^{\perp}_k / V_{k}
\]
is non-zero only on the first $k$ coordinates. The restriction of the pairing induced by $M_{n,n}$ to the first $k$ coordinates is given by the top-left $k\times k$ corner of $M_{n,n}$, i.e. the matrix $M_{k,k}$. 
\end{proof}

%

\paragraph{The image of $e_n$:}

Recall that $e_n$ is defined to be the $n^{th}$ coordinate vector in $\Z_p^n$.
\begin{lemma}
\label{extrastatement}
The image of $e_n$ in $ker(M_{n-1,n-1}) \times \Q_p$ must be of the form $(g,1)$ for some $g \in ker(M_{n-1,n-1})$.
\end{lemma}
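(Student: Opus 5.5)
The plan is to unwind the isomorphism of \autoref{claim:splittingisomorphism} for $k=n-1$ and track where $e_n$ goes.

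First, $e_n$ lies in $V_n \subset V_n^{\perp} \subset V_{n-1}^{\perp}$ by (\ref{eqn:nested}), so its image in $V_{n-1}^{\perp}/V_{n-1}$ makes sense. We then apply the map $V_{n-1}^{\perp}/V_{n-1} \to \Q_p$ used in the proof of \autoref{claim:splittingisomorphism}. That map is the projection onto the last coordinate, so $e_n$ is sent to $1$.

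Next we need the first component. The splitting $V_{n-1}^{\perp}/V_{n-1} \cong ker(M_{n-1,n-1}) \times \Q_p$ is built from the exact sequence
\[
0\rightarrow ker(M_{n-1,n-1}) \rightarrow V_{n-1}^{\perp}/V_{n-1} \rightarrow \Q_p \rightarrow 0 .
\]
Under any isomorphism compatible with this sequence, an element mapping to $1 \in \Q_p$ corresponds to a pair $(g,1)$ with $g \in ker(M_{n-1,n-1})$. Here $g$ is the projection of $e_n$ onto the finite factor. So the only content is that the second coordinate equals $1$ and the first lies in the torsion part.

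The one point needing care is that the splitting is canonical and really is a product decomposition compatible with the projection. Since $ker(M_{n-1,n-1})$ is finite and $\Q_p$ is divisible and torsion-free, the torsion subgroup of $V_{n-1}^{\perp}/V_{n-1}$ equals $ker(M_{n-1,n-1})$. The complement can be taken to be the maximal divisible subgroup, which is mapped isomorphically onto $\Q_p$.

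Concretely, write $N=|ker(M_{n-1,n-1})|$. Then the divisible part of $e_n$ is $N^{-1}\cdot(\text{any }N\text{-th divisor of } Ne_n)$. Equivalently, $Ne_n$ lies in the divisible part, since it maps to $N$ and $N$ kills the torsion. Hence
\[
g = e_n - \tfrac{1}{N}\,(Ne_n)_{\mathrm{div}} ,
\]
which is torsion. This step is essentially bookkeeping, and it is the only place requiring any argument. After it, the statement is immediate.
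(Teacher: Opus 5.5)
Your proof is correct and is the paper's argument: the $\Q_p$-coordinate of the splitting in \autoref{claim:splittingisomorphism} is the last-coordinate projection, which sends $e_n$ to $1$, and the first coordinate lies in $ker(M_{n-1,n-1})$ by construction. Your extra discussion of the canonical splitting (torsion part plus maximal divisible part) only unpacks what the paper asserts in the proof of that lemma. One small correction to your explicit formula: the divisible part of $e_n$ is the \emph{unique} $N$-th root of $Ne_n$ lying in the divisible subgroup, not ``any'' $N$-th divisor of $Ne_n$.
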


\begin{proof}
By construction, the map from $V_{n-1}^{\perp}/V_{n-1} \cong ker(M_{n-1,n-1}) \times \Q_p$ unto the $\Q_p$-factor is projection of $V_{n-1}^{\perp}/V_{n-1}$ unto the last coordinate. The last coordinate of $e_n$ is $1$.
\end{proof}

\subsubsection{Proof of \autoref{thm:distribution}}
\label{sec: distribution theorem proof}

In this section, we wish to determine the joint distribution of $g$ and the pairing on $\Q_p$. We first make two important observations.

\begin{itemize}
\item $g$ is determined uniquely by the dual of $g$, denoted as $g^{\vee}$.
\item The pairing on $\Q_p$ is determined uniquely by the dual of $1$, denoted as $q$.
\end{itemize}

To prove \autoref{thm:distribution}, it suffices to show:
\begin{lemma}
\label{lem:modifiedlemma}
The joint distribution of $(g^{\vee},q)$ satisfies the following two conditions:
\begin{itemize}
\item $g^{\vee}$ is a uniformly random element of the dual of $ker(M_{n-1,n-1})$.
\item The joint distribution of $(g^{\vee},q)$ is invariant under $q \rightarrow q+a$, for any $a 
\in 
\Z_p$.
\end{itemize}
\end{lemma}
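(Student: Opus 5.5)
The plan is to make the splitting of \autoref{claim:splittingisomorphism} completely explicit for $k=n-1$, and then read off $g$ (hence $g^{\vee}$) and $q$ as explicit functions of the random last row and column. Write $M=M_{n-1,n-1}$, which is fixed and non-singular. Write the random last column of $M_{n,n}$ as $(b,c)$, where $b\in\Z_p^{n-1}$ and $c\in\Z_p$ are independent and Haar distributed; the last row is $(b^T,c)$ by symmetry. Vectors of $\Q_p^n$ will be written as $(x,t)$ with $x\in\Q_p^{n-1}$ and $t\in\Q_p$. Then $(x,t)\in V_{n-1}^{\perp}$ if and only if $Mx+bt\in\Z_p^{n-1}$.

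First I identify the $\Q_p$-factor. Since $ker(M)$ is finite, the canonical splitting of the exact sequence in the proof of \autoref{claim:splittingisomorphism} must send $\Q_p$ onto the maximal divisible subgroup of $V_{n-1}^{\perp}/V_{n-1}$. Consider the vector $w\defeq(-M^{-1}b,1)$. It satisfies $Mx+bt=0$, so the line $\Q_p w$ lies in $V_{n-1}^{\perp}$. Its image in the quotient is divisible and projects isomorphically onto the last coordinate. Hence the $\Q_p$-factor is $\Q_p w$, and the element $1$ of that factor is $w$.

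Next I decompose $e_n=(M^{-1}b,0)+w$. The first summand lies in $V_{n-1}\otimes\Q_p\cap V_{n-1}^{\perp}$, which is $M^{-1}\Z_p^{n-1}$, and its class there is the element of $ker(M)\cong M^{-1}\Z_p^{n-1}/\Z_p^{n-1}$ in the identification (\ref{eqn:isokernel}). Thus $g$ is the class of $M^{-1}b$, which matches \autoref{extrastatement}. For the pairing on the $\Q_p$-factor, $q=\langle w,w\rangle$, and a direct computation with $M_{n,n}$ gives
\[
q \equiv b^TM^{-1}b-2\,b^TM^{-1}b+c = c-b^TM^{-1}b \mod \Z_p .
\]
Here $q$ means the element of $\Q_p$ defining the pairing via (\ref{eqn:qpairing}), so it is exactly $c-b^TM^{-1}b$ in $\Q_p$, not just modulo $\Z_p$.

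The two bullet points then follow from these formulas. The map $b\mapsto M^{-1}b \bmod \Z_p^{n-1}$ is a surjective group homomorphism $\Z_p^{n-1}\to ker(M)$, so it pushes Haar measure forward to the uniform measure; hence $g$ is uniform. Because the pairing on $ker(M)$ is perfect (\autoref{rem:perfect}), $g\mapsto g^{\vee}$ is a bijection, so $g^{\vee}$ is uniform on the dual. For the second condition, $g^{\vee}$ depends only on $b$, while $q=c-b^TM^{-1}b$ with $c$ Haar and independent of $b$. Translation by $a\in\Z_p$ preserves Haar measure on $\Z_p$, so $(b,c)$ and $(b,c+a)$ have the same law, and therefore $(g^{\vee},q)$ and $(g^{\vee},q+a)$ do too.

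The main obstacle I expect is the identification of the canonical splitting with the line $\Q_p w$: one has to argue that the splitting is forced to be the divisible part, and that this is the same splitting used in \autoref{claim:splittingisomorphism} and \autoref{extrastatement}. Once that is settled, the rest is routine bookkeeping.
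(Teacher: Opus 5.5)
Your proposal is correct, but it takes a different route from the paper.

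\textbf{The paper's route.} The paper never computes the splitting. It observes that $(g,1)^{\vee}=(g^{\vee},q)$ is the pull-back to $V_{n-1}^{\perp}/V_{n-1}$ of the functional $e_n^{\vee}=M_{n,n}e_n$, i.e.\ of the Haar random last column $(b,c)$.
\begin{itemize}
\item Uniformity of $g^{\vee}$ follows by restricting this Haar random functional to $\ker(M_{n-1,n-1})$. This is a subgroup of $\Q_p^n/V_{n-1}$ that does not depend on $(b,c)$.
\item Translation invariance follows because replacing $c$ by $c+a$ leaves $V_{n-1}^{\perp}$ unchanged. It only adds to $e_n^{\vee}$ the functional $(0,\dots,0,a)$, whose pull-back is $(0,a)$.
\end{itemize}

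\textbf{Your route.} You identify the canonical $\Q_p$-factor as the image of the line $\Q_p w$, with $w=(-M^{-1}b,1)$. From this you get the closed formulas $g=[M^{-1}b]$ and $q=c-b^{T}M^{-1}b$ (a Schur complement). You prove uniformity for $g$ and transport it to $g^{\vee}$ by perfectness, whereas the paper obtains $g^{\vee}$ directly. These are dual descriptions of the same fact, since $g^{\vee}(h)=b^{T}h \bmod \Z_p$ for $h\in M^{-1}\Z_p^{n-1}/\Z_p^{n-1}$.

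\textbf{What each buys.} The paper's argument is coordinate-free and does not need to pin down the splitting. Yours makes explicit the facts the paper leaves to be ``checked directly'', and proves \autoref{extrastatement} along the way. It also yields more than the lemma asks. Conditional on $b$, $q$ is Haar distributed on the coset $-b^{T}M^{-1}b+\Z_p$, i.e.\ $q$ is a uniform lift of $-\langle g,g\rangle$. This makes the uniqueness argument used to pass from \autoref{thm:distribution} to \autoref{thm:maintheorem} unnecessary.

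\textbf{A small phrasing fix.} $q$ is not determined by $\langle w,w\rangle\in\Q_p/\Z_p$ alone. It is determined by $\langle sw,tw\rangle = st\,w^{T}M_{n,n}w \bmod \Z_p$ for all $s,t\in\Q_p$. That is the actual reason $q=w^{T}M_{n,n}w=c-b^{T}M^{-1}b$ holds exactly in $\Q_p$.
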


\noindent
To prove \ref{lem:modifiedlemma}, we make the observation that 
\[
(g,1)^{\vee}= (g^{\vee},q)
\]
and recall that $(g,1)$ is defined as the image of $e_n$ in $\VV$. Hence, $(g,1)^{\vee}$ is the pull-back of $e_n^{\vee}$ to $\VV$. But because
\[
e_n^{\vee} = 
\begin{mat}{ccc|c}
   & & & *\\
   & M& &\vdots\\
   & & & *\\ \hline
   * & \hdots & * & *
   \end{mat} e_n,
\] 
$e_n^{\vee}$ is a Haar random vector in $\Z_p^n$.

\paragraph{Calculating the pull-back of to $e_n^{\vee}$ to $V_n^{\perp} / V_n$}
$V_{n-1}^{\perp}/V_{n-1}$ is a random subgroup of $\sfrac{\Q_p^n}{V_{n-1}}$. It is not independent of $e_n^{\vee}$. Hence, care is needed when taking the pull-back. We will use the following facts:

\begin{itemize}
\item The subgroup \[ker(M_{n-1,n-1}) \hookrightarrow V_{n-1}^{\perp}/V_{n-1}\] is a \textit{fixed} subgroup of $\sfrac{\Q_p^n}{V_{n-1}}$.
\item $V_{n-1}^{\perp}/V_{n-1}$ is independent of the bottom right entry of $\probM_{n,n}$.
\item For $a \in \Z_p$, the pull-back of
\[
\begin{mat}{ccccc}
0&0&\hdots&0&a\\
\end{mat} \in \, Hom\Big( \sfrac{\Q_p^n}{V_{n-1}} , \sfrac{\Q_p}{\Z_p}  \Big)
\]
to $V_{n-1}^{\perp}/V_{n-1} \cong \ker(M_{n-1,n-1}) \times \Q_p$ is $(0,a)$.
\item[] \begin{proof}
We prove the last statement; the other two statements can be checked directly. The last statement follows because the composition
\begin{equation}
\label{eqn: map just defined}
V_{n-1}^{\perp}/V_{n-1} \cong ker(M_{n-1,n-1}) \times \Q_p \rightarrow \Q_p
\end{equation}
is projection unto the last coordinate.
\end{proof}
\end{itemize}

\begin{lemma}
\label{lem:lemmaA}
$g^{\vee}$ is a uniformly random element in the dual of $ker(M_{n-1,n-1})$
\end{lemma}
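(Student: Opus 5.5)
The plan is to compute $g^{\vee}$ directly as the restriction of the functional $e_n^{\vee}$ to the fixed subgroup $ker(M_{n-1,n-1}) \hookrightarrow V_{n-1}^{\perp}/V_{n-1}$, and then to observe that this restriction depends only on the last column above the diagonal. Concretely, $g^{\vee}$ is the first component of the pull-back of $e_n^{\vee}$ to $ker(M_{n-1,n-1}) \times \Q_p$. Since the pairing splits with no cross-terms (\autoref{claim:pairing}), this first component is exactly the restriction of the pull-back to the summand $ker(M_{n-1,n-1})$.

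We use the first listed fact: the embedding of $ker(M_{n-1,n-1})$ into $\sfrac{\Q_p^n}{V_{n-1}}$ is fixed, and equals $\{(v,0) : v \in \Q_p^{n-1},\ M_{n-1,n-1}v \in \Z_p^{n-1}\}/V_{n-1}$. For such an element $(v,0)$ we have
\[
\langle (v,0), e_n\rangle = (v,0)^T \mathcal{M}_{n,n} e_n = v^T c \mod \Z_p,
\]
where $c \in \Z_p^{n-1}$ is the column of $*$ entries above the bottom-right corner. So $g^{\vee}$ is the character $v \mapsto v^T c \bmod \Z_p$ on $ker(M_{n-1,n-1})$. Since $M$ is fixed and $c$ is Haar random in $\Z_p^{n-1}$, it remains to show that this assignment sends the Haar measure to the uniform measure on the dual.

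The map $\Phi:\Z_p^{n-1} \to Hom(ker(M_{n-1,n-1}),\Q_p/\Z_p)$, $c \mapsto (v \mapsto v^Tc)$, is a continuous group homomorphism. The pushforward of Haar measure under a surjective homomorphism of compact groups is Haar measure on the target, which here is the uniform measure on the finite dual. So the key step is surjectivity of $\Phi$. Write $K = ker(M_{n-1,n-1}) \subset (\Q_p/\Z_p)^{n-1}$. The map $\Phi$ is the composite of the Pontryagin duality isomorphism $\Z_p^{n-1} \cong Hom((\Q_p/\Z_p)^{n-1},\Q_p/\Z_p)$ with restriction to $K$. Restriction is surjective because $\Q_p/\Z_p$ is divisible, hence injective, so every character of $K$ extends. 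Alternatively, one can argue via Smith normal form, with $K \cong \bigoplus \Z/p^{a_i}$ sitting in coordinate copies.

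The main point needing care is just this surjectivity, together with checking that the pairing formula uses only the column $c$ and not the corner entry. The corner entry only affects the $\Q_p$-component, which is consistent with the second listed fact.
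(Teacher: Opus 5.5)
Your proposal is correct and follows the same route as the paper: $g^{\vee}$ is the pull-back of the Haar-random functional $e_n^{\vee}$ to the fixed subgroup $ker(M_{n-1,n-1})$, and the pull-back of a Haar-random character to a fixed subgroup is Haar-random, hence uniform. You also make explicit two points the paper leaves implicit: only the column $c$ enters, not the corner entry, and restricting characters to $ker(M_{n-1,n-1})$ is surjective because $\Q_p/\Z_p$ is divisible.
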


\begin{proof}
Firstly, we observe that, although $\VV$ can vary, the kernel of (\ref{eqn: map just defined}) is a fixed subgroup of $\Q_p^n / \Z_p^n$.
\newline
\newline
$g^{\vee}$ is the pull-back of $e_n^{\vee}$ to $ker(M_{n-1,n-1})$
$e_n^{\vee}$ is a Haar random element in the dual of $(\Q_p/\Z_p)^n$. The pull-back of a Haar random element in the dual of $(\Q_p/\Z_p)^n$ to a fixed subgroup is Haar random. The Haar measure on a finite group is the uniform measure.
\end{proof}

\begin{lemma}
\label{lem:qtranslation}
The distribution of $(g^{\vee},q)$ is invariant under the action of:
\[
q \rightarrow q+a
\]
for any $a \in \Z_p$.
\end{lemma}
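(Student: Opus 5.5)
The plan is to exploit the independence of the bottom-right entry $m_{nn}$ of $\probM_{n,n}$ from everything else. We condition on all entries other than $m_{nn}$, namely $M=M_{n-1,n-1}$ and the off-diagonal entries $m_{in}=m_{ni}$ for $i<n$. By the facts listed above, $V_{n-1}^{\perp}/V_{n-1}$ and the identification $V_{n-1}^{\perp}/V_{n-1}\cong ker(M_{n-1,n-1})\times\Q_p$ do not depend on $m_{nn}$. The membership condition for $V_{n-1}^\perp$ only involves pairing against $e_1,\dots,e_{n-1}$, i.e.\ the first $n-1$ rows of $\probM_{n,n}$. The splitting itself is canonical.

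Next I write $e_n^{\vee}=\probM_{n,n}e_n = w + m_{nn}e_n$, where $w=(m_{1n},\dots,m_{n-1,n},0)^T$ depends only on the conditioned data. Viewing $e_n^\vee$ as a character on $\Q_p^n/V_{n-1}$ via $x\mapsto x^Te_n^\vee \bmod \Z_p$, its pull-back to $V_{n-1}^{\perp}/V_{n-1}$ splits additively. It equals the pull-back of $w$ plus the pull-back of $(0,\dots,0,m_{nn})$. By the third listed fact, the latter is $(0,m_{nn})\in ker(M_{n-1,n-1})\times\Q_p$. Therefore, conditionally on the other entries,
\[
(g^{\vee},q) = (g_0^{\vee},q_0) + (0,m_{nn}),
\]
where $(g_0^\vee,q_0)$ is a deterministic function of the conditioned entries. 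Here I am using that pull-back along the fixed identification commutes with addition, and that the dual of the $\Q_p$ factor is identified with $\Q_p$ compatibly with (\ref{eqn:qpairing}).

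Since $m_{nn}$ is Haar distributed on $\Z_p$ and independent of $(g_0^\vee,q_0)$, the conditional law of $(g^\vee,q)$ is the law of $(g_0^\vee, q_0+m_{nn})$. This is invariant under $q\mapsto q+a$ for every $a\in\Z_p$, because Haar measure on $\Z_p$ is translation invariant. Integrating over the conditioning data then gives invariance of the joint distribution.

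The main obstacle is bookkeeping rather than substance. One has to be careful that the dual coordinates $(g^\vee,q)$ are taken with respect to a decomposition that does not move when $m_{nn}$ changes; in particular, the $\Q_p$-factor's parametrization must be independent of $m_{nn}$. I would check this by explicitly describing the splitting: an element of $V_{n-1}^\perp/V_{n-1}$ with last coordinate $t$ is $(x,t)$ with $M x + t\,(m_{in})_{i<n}\in\Z_p^{n-1}$. This condition visibly involves no $m_{nn}$.
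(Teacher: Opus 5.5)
Your proof is correct and follows essentially the same route as the paper. The paper applies the distribution-preserving shift $\probM_{n,n}\mapsto\probM_{n,n}+aE_{nn}$, notes that it fixes $V_{n-1}^{\perp}$ and changes $e_n^{\vee}$ by $(0,\dots,0,a)$, whose pull-back is $(0,a)$; your conditioning on all entries except $m_{nn}$, writing $(g^{\vee},q)=(g_0^{\vee},q_0)+(0,m_{nn})$, is the same observation phrased through independence and translation invariance of Haar measure on $\Z_p$, and it even gives the slightly stronger conditional statement.
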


\begin{proof}
The operation \begin{equation}
\label{eqn:matrixsum}
\probM_{n,n} \rightarrow \probM_{n,n} +
\begin{mat}{ccc|c}
0&\hdots&0&0\\
\vdots&\ddots&\vdots&\vdots\\
0&\hdots&0&0\\ \hline
0&\hdots&0&a\\
\end{mat}
\end{equation} does not change the distribution of $\probM_{n,n}$. This operation:
\begin{itemize}
\item Changes $e_n^{\vee}$ to 
\[
e_n^{\vee} + 
\begin{mat}{ccccc}
0&0&\hdots&0&a\\
\end{mat}
\]
\item Does not change
$
V_{n-1}^{\perp}
$
\end{itemize}
Because of the second point, the difference of the pull-backs of 
\[
e_n \hspace{0.1in} \text{ and } \hspace{0.1in} e_n^{\vee} + \begin{mat}{ccccc}
0&0&\hdots&0&a\\
\end{mat}
\]
to $V_{n-1}^{\perp}/V_{n}$, is the pull-back of 
\[
\begin{mat}{ccccc}
0&0&\hdots&0&a\\
\end{mat}
\]
$V_{n-1}^{\perp}/V_{n-1}$. By the statement directly preceding \autoref{lem:lemmaA}, this is $(0,a)$. Thus $(g^{\vee},q)$ changes to $(g^{\vee},q+a)$. Thus, since the operation (\ref{eqn:matrixsum}) does not change the distribution of $\probM_{n,n}$. we conclude that the joint distribution of $(g^{\vee},q)$ is likewise invariant under 
\[
q \rightarrow q+a.
\]
This proves \autoref{lem:qtranslation}.
\end{proof}

\begin{mysection}
(((Ideas to pursue: over real numbers, random tensors, multiple dimensions)))
((( Functoriality of process as coming from functoriality of edges. )))
((((Entropy? - process as gradient flow))))
(Babushkina peredacha pro okeany: Shokiruyushie gipotezy)
(Peredacha bylo v den' razgovora s Alexom, v voskresenie, 7 aprelya. Pro pogrujenie materikov pod vodu po-moemu...)
\end{mysection}

\section{Proof of Reversibility}
\label{sec:reversibility}

The aim of the remainder of this paper is to show that the Markov chain $\Delta_S$ is reversible with respect to the measure (\ref{def: measure s}). This section can be read independently of the rest of this paper.

\subsection{Overview}
\label{sec: overview reversibility}

Recall that we define $\mcG$ to be the set of isomorphism classes of finite abelian $p$-groups that are equipped with a perfect symmetric pairing. $\Delta_S$ generates a Markov chain on the state space $\mcG$. We now briefly outline the proof of reversibility.
\somespace 
We first recall the definition of $\Delta_S$ and provide some equivalent formulations. We then observe that we can realize the process\footnote{This is just the natural way in which the definition of the process as Markov chain suggests an interpretation as a random walk on a \textit{directed} weighted graph.} as a random walk on a \textit{directed} weighted graph $\Gamma$. We have some freedom in choosing the weights, as the total weight of edges leaving a vertex can be chosen to be any non-vanishing measure $\mu$ on $\mcG$. We show that the set of directed edges of $\Gamma$ is in bijection with isomorphism classes of \textit{pairs} of certain exact sequences, We denote the set of pairs of such sequences as $\mcH$. Elements of $\mcH$ are pairs of exact sequences:
\begin{equation*}
0\rightarrow \Z_p\xrightarrow{v_1} H \rightarrow H/v_1 \rightarrow 0 \hspace{0.1in}\text{ and }\hspace{0.1in} 0\rightarrow \Z_p\xrightarrow{v_2} H \rightarrow H/v_2 \rightarrow 0
\end{equation*}
together with perfect pairings:
\[<\,,\,>_1\,:\, H/v_1 \times H/v_1 \rightarrow \Q_p/\Z_p \hspace{0.1in}\text{ and } \hspace{0.1in} <\,,\,>_2 \,:\, H/v_2 \times H/v_2 \rightarrow \Q_p/\Z_p.\]
whose pull-backs to $H$ satisfy a certain compatibility condition (namely, condition \ref{eqn: relation}) with $v_1$ and $v_2$. Such a pair of sequences above corresponds to a directed edge, that starts at $(H/v_1,<,>_1)$ and that ends at $(H/v_2,<,>_2)$. We observe that isomorphism classes in $\mcH$ admit a simple involution $\sigma$ that reverses $(H/v_1,<,>_1)$ and $(H/v_2,<,>_2)$. Therefore, this involution reverses the endpoints of the corresponding directed edge of $\Gamma$. Furthermore, we observe that there is a choice of $\mu$ such that the weight of the directed edges is invariant under this involution.  We conclude that this choice of $\mu$ allows us to realize the process as a symmetric random walk on an undirected weighted graph. Futhermore, the symmetric random walk is reversible with respect to the above choice of $\mu$.

\newcommand{\appendixone}{
\subsubsection{Comparison with the Markov chain generated by $\Delta_0$ (the analogue of $\Delta_S$ for matrices without symmetry)}

Recall that the Markov chain generated by $\Delta_0$ can be realized as a walk on a random graph whose edges are in bijection with isomorphism classes of exact sequences ($0 \rightarrow \Z_p \rightarrow H \rightarrow G \rightarrow 0$), endowed with the weight \[1\,/ \,\#Aut(\Z_p \rightarrow H)\]\footnote{We remark that this bipartite graph is simple, i.e. any two vertices are connected by at most one edge. Also we note that two vertices are connected if and only if the associated partitions \textit{interlace}.}.
\somespace
The operator $\Delta_0$ corresponds to taking two steps on this graph.
\somespace
A natural analogue would be to consider a bipartite graph whose edges are of the form
\[
...
\]
In this case, we would need the weight
\[
...
\]
to get the proper equilibrium measure. However, the process corresponding to this graph would have the property that the quotient does not depend on the extension, and the second pairing does not depend on the first. (Both the quotient and the pairing only depend on the structure of $H$.)
\somespace
The compatibility ensures that the pairings are related, and that the quotient depends on the extension.
}

%
%

\subsection{Various equivalent definitions of $\Delta_S$}

In this section, we will recall the definition of $\Delta_S$ and we will give two other equivalent reformulations, the first of which will be very important for the sequel. The purpose of the last reformulation is to point out the connection with the process studied in \cite{arxiv}.

\subsubsection{Original definition of $\Delta_S$}
Recall our original definition of the process. Given $G\in \mcG$, we pick a random element $g \in G$. Then pick a random pairing on $G \times \Z_p$, that
\begin{itemize}
    \item  restricts to the original pairing on $G$,
    \item  satisfies $(g,1) \in (g,1)^\perp$.
\end{itemize}
The new element of $\mcG$ will be $(g,1)^{\perp}/(g,1)$. We will give two equivalent definitions of the process that will be useful for the sequel.

\subsubsection{Alternative definition of the process}
\label{sec: alternative definition}
We will formulate an equivalent formulation of $\Delta_S$. This fomulation will be the one that will be most important for the sequel.
\somespace
Pick a random commutative diagram:
\[
\begin{tikzcd}
&\Z_p\arrow{dr} \arrow{dl}&\\
G\arrow{dr}&&\Q_p\arrow{dl}\\
&\Q_p/\Z_p&
\end{tikzcd}
\]
such that the left side is self-dual.\footnote{This means we choice of $g \in G$ and $q\in \Q_p$ such that $<g,g>\equiv q$.} There is a unique pairing on $\Q_p$, such that the right side is also self-dual. This diagram gives rise to a self-dual sequence of homomorphisms\footnote{To get from the diagram to the sequence, we multiply the map $\Q_p \rightarrow \Q_p/\Z_p$ by $-1$; to preserve self-duality, we also multiply the pairing on $\Q_p$ by $-1$.}:
\begin{equation}
\label{eqn: sequence}
\Z_p \rightarrow \Q_p \times G \rightarrow \Q_p/\Z_p,
\end{equation}
that compose to $0$. Take the kernel modulo the image. This will be a new finite group together with a perfect pairing.

\subsubsection{Slight variation on preceding definition}
\label{sec: comparison of two processes}
We give a slight variation on the preceding definition that emphasizes that the connection to the process $\Delta_0$ in \cite{arxiv}. Recall that the generator of $\Delta_0$. In this case, given $G$, $\Delta_0(G)$ is defined by
\begin{itemize}
\item picking a uniformly random $\Z_p$-extension of $G$,
\item moddding out by a Haar random element of this extension.
\end{itemize}

\noindent
Returning to the definition of $\Delta_S$, we can formulate it as follows. We again first choose a uniformly random $\Z_p$-extension of $G$. This gives rise to an exact sequence 
\[
0\rightarrow \Z_p \rightarrow H \rightarrow G \rightarrow 0
\]
which gives rise to a commutative diagram
\[
\begin{tikzcd}
&H\arrow{dr} \arrow{dl}&\\
G\arrow{dr}&&\Q_p\arrow{dl}\\
&\Q_p/\Z_p&
\end{tikzcd}
\]
Now pick a Haar random element $\Z_p \rightarrow H$, subject to the condition that the left side of the diagram:
\[
\begin{tikzcd}
&\Z_p\arrow{dr} \arrow{dl}&\\
G\arrow{dr}&&\Q_p\arrow{dl}\\
&\Q_p/\Z_p&
\end{tikzcd}
\]
is self-dual. There will be a unique choice of pairing on $\Q_p$ that makes the right side of the diagram also self-dual. As in \S \ref{sec: alternative definition}, this gives rise to a self-dual sequence:
\[
\Z_p \rightarrow G \times \Q_p \rightarrow \Q_p/\Z_p
\]
Taking the kernel modulo the image, we get a finite abelian $p$-group together with a perfect pairing. 

\subsection{Representation by a weighted directed graph}
We will use the definition of the process as formulated in \S \ref{sec: alternative definition}. Given this formulation, it is natural to represent it as a random walk on a directed weighted graph $\Gamma$, whose \textit{vertices} are isomorphism classes in $\mcG$ and whose \textit{directed edges} are isomorphism classes of self-dual diagrams:
\begin{equation}
\label{eqn: self-dual diagram}
\begin{tikzcd}
&\Z_p\arrow[dr] \arrow[dl]&\\
G\arrow[dr]&&\Q_p\arrow[dl]\\
&\Q_p/\Z_p&
\end{tikzcd}
\end{equation}
where two diagrams are isomorphic if and only if they fit into a self-dual diagram:
\[
\begin{tikzcd}
&&&&\Z_p\arrow[dr] \arrow[dl] \arrow[dllll]&\\
G\arrow[drrrr]&&\cong&G\arrow[dr]&&\Q_p\arrow[dl]\\
&&&&\Q_p/\Z_p&
\end{tikzcd}
\]
The weight of the edge is proportional to the probability of choosing the corresponding isomorphism class of diagrams. By the orbit-stabilizer theorem, this probability can be computed to be:
\[
\frac{\#Group}{\#G\#Stabilizer}dq=\frac{\#Aut(G,<,>)}{\#G \#Aut(G,<,>,g)}dq.
\]
We note that the total weight of the edges leaving a vertex can be chosen to be any non-vanishing measure $\mu$ on $\mcG$. Thus, the general form of the edge weight is:
\begin{equation}
\label{eqn: edgeweight}
\frac{\#Aut(G,<,>)}{\#G\#Aut(G,<,>,g)} \mu(G,<,>)dq.
\end{equation}
\paragraph{\textbf{Goal.}} Our goal is to find an end-point reversing involution on the set of directed edges, and a choice of $\mu$, such that the involution preserves the edge weight (\ref{eqn: edgeweight}). 
This will realize the random process as a symmetric random walk on an \textit{undirected} weighted graph, thereby proving that the process is reversible. \somespace Furthermore, the stationary measure will correspond to the total weight of the edges adjacent to a given vertex; hence, by definition, it will be the measure $\mu$.
\somespace
We will realize the preceding goal by first showing that the set of directed edges is in bijection with another set $\mcH$; we will see that $\mcH$ carries a natural involution $\sigma$.
Having done this, we will be find that there is a natural choice of $\mu$ such that \ref{eqn: edgeweight} is $\sigma$-invariant.

\newcommand{\todeleteone}{
\section{...}
[We begin with the self-dual diagram \ref{eqn: self-dual diagram}.]
\somespace
 We reverse the procedure in ... to get an exact sequence $0 \rightarrow \Z_p \rightarrow H \rightarrow G \rightarrow 0$, together with another map $\Z_p \rightarrow H$. [We have seen that the cokernel of this last map is a finite abelian $p$-group together with a perfect pairing.][This gives an isomorphism class of pairs of exact sequences:
 \[
 ...
 \]
 It remains to show that the compatibility condition is satisfied.]
 \somespace
 Explicitly, $H$ is the fiber product:\footnote{and there is an associated exact sequence $0\rightarrow \Z_p \rightarrow H \rightarrow G \rightarrow 0$} \footnote{Try to make this as laconic as possible.}
 \[
 H\hspace{0.2in} \cong \hspace{0.2in}
\begin{tikzcd}
G\arrow{dr}& \times & \Q_p\arrow{dl}\\
&\Q_p/\Z_p&
\end{tikzcd}
 \]
 The first map $\Z_p \xrightarrow{v_1} H$ comes from exact sequence $0\rightarrow \Z_p \rightarrow H \rightarrow G \rightarrow 0$, or equivalently the diagram:
 \[
 \begin{tikzcd}
  & \Z_p\arrow[dr,"1"] \arrow[dl,"0"']&       \\
G \arrow[dr]&          &\Q_p \arrow[dl]     \\
           &\Q_p/\Z_p &                    \\
\end{tikzcd}
 \]
 
 The second map $\Z_p \xrightarrow{v_2} H $ comes from the self-dual diagram \ref{eqn: self-dual diagram}.
\somespace
$H$ has two pairings, the old pairing and the new pairing. By construction $v_1$ generates the kernel of the old pairing and $v_2$ generates the kernel of the new pairing. The difference of the two pairings factors through the map $H \rightarrow \Q_p$. [It can be deduced that] the two pairings satisfy the following compatibility condition:
\begin{equation}
\label{eqn: compabitility condition}
<h_1,h_2>_2-<h_1,h_2>_1 \hspace{0.1in} \equiv \hspace{0.1in} \frac{(h_1\otimes \Q_p)(h_2\otimes \Q_p)}{(v_1\otimes \Q_p)(v_2\otimes \Q_p)} \mod \Z_p \hspace{0.1in}
\end{equation}
for all $h_1,h_2\in H$.

 }

\subsection{Definition of $\mcH$}

\newcommand{\additionalcommentsone}{
To prove that the random process is reversible, we need to represent it as a random walk on an undirected graph. To do so, we will find a measure preserving involution on the directed edges that reverses end-points. This involution will preserve the weights, for some carefully chosen $F$.
\somespace
To define this involution, we parametrize the set of directed edges by another set $\mcH$. The advantage of this is that $\mcH$ carries a natural involution.
First, we define $\mcH$.
}

\paragraph{Definition}
Let $\mcH$ to be the set of isomorphism classes of finitely generated abelian $p$-groups $H$ of $\Q_p$-rank $1$, together with the following data:
\begin{itemize}
    \item Two elements of $H$, denoted by $v_1$ and $v_2$, 
    \item A perfect symmetric pairing on $H/v_1$, and a perfect symmetric pairing on $H/v_2$,
\end{itemize}
satisfying the following compatibility condition:
\begin{equation}
\label{eqn: relation}
<\sdot,\sdot>_1-<\sdot,\sdot>_2
\, \equiv \,
\frac{(\sdot \otimes\Q_p)(\sdot \otimes\Q_p)}{(v_1 \otimes\Q_p)(v_2 \otimes\Q_p)}
\mod \Z_p \end{equation}
We denote an element of $\mcH$ as $(H,v_1,v_2,<,>_1,<,>_2)$.

\paragraph{Comment} We explain the notation in (\ref{eqn: relation}). Since $H$ has $\Q_p$ rank $1$, there is a homomorphism $H \rightarrow \Q_p$, given by $H \rightarrow H \otimes\Q_p \cong \Q_p$. This homomorphism is unique up to $Aut(\Q_p)$, i.e. up to multiplication by $\Q_p^{*}$. Hence, the expression on the right of (\ref{eqn: relation}) is well-defined. 

\begin{corollary} 
Substituting $v_1$ and $v_2$ into the expression (\ref{eqn: relation}), we find:\begin{align*}
<\sdot,v_2>_1 \,&=\,\,\frac{(\sdot\otimes \Q_p)}{(v_1\otimes \Q_p)}\\
<\sdot,v_1>_2\,&= -\,\frac{(\sdot\otimes \Q_p)}{(v_2\otimes \Q_p)}
\end{align*}
\end{corollary}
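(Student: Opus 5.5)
The plan is to substitute $v_2$, and then $v_1$, into the compatibility relation (\ref{eqn: relation}), which holds for every pair of arguments in $H$. Throughout, let $\phi: H \to H\otimes\Q_p \cong \Q_p$ be the fixed homomorphism, so that $(\sdot\otimes\Q_p)$ means $\phi(\sdot)$. The right-hand side of (\ref{eqn: relation}) is $\phi(h_1)\phi(h_2)/(\phi(v_1)\phi(v_2))$. This is invariant under rescaling $\phi$ by $\Q_p^{*}$, so any choice of $\phi$ will do.

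The first and main step is to make sense of the two pairings. The pairing $<,>_1$ lives on $H/v_1$, and I pull it back along $H\to H/v_1$ to a pairing on $H$. This pulled-back form has $v_1$ in its radical, so $<h,v_1>_1=0$ for all $h\in H$. Likewise $<h,v_2>_2=0$. The relation (\ref{eqn: relation}) is to be read as an identity between these pulled-back pairings on $H\times H$.

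For the first identity, set $h_2=v_2$ and let $h_1=h$ be arbitrary. Then $<h,v_2>_2=0$, and the relation gives
\[
<h,v_2>_1 \equiv \frac{\phi(h)\phi(v_2)}{\phi(v_1)\phi(v_2)} = \frac{\phi(h)}{\phi(v_1)} \mod \Z_p .
\]
For the second identity, set $h_2=v_1$. Now $<h,v_1>_1=0$, so $-<h,v_1>_2 \equiv \phi(h)/\phi(v_2)$, which is the stated formula with the minus sign.

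The one point that needs care is checking that the cancellations are legitimate. We need $\phi(v_1)\neq 0$ and $\phi(v_2)\neq 0$. Equivalently, $v_1$ and $v_2$ must be non-torsion; otherwise the right-hand side of (\ref{eqn: relation}) is not even defined. Non-torsion is automatic here. The quotient $H/v_i$ carries a perfect pairing, and a perfect pairing with values in $\Q_p/\Z_p$ on a finitely generated $p$-group forces that group to be finite. Since $H$ has $\Q_p$-rank $1$, the quotient $H/v_i$ can only be finite if $v_i$ has infinite order. With this established, both displayed identities follow, read modulo $\Z_p$ as elements of $\Q_p/\Z_p$.
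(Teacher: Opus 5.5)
Your proposal is correct and is the paper's argument: substitute $v_2$, then $v_1$, into (\ref{eqn: relation}), using that $v_i$ lies in the radical of $\langle\cdot,\cdot\rangle_i$ pulled back to $H$. Your two extra checks are left implicit in the paper but are right: the pairings must be read as pulled back to $H$, and $v_1,v_2$ must be non-torsion so the denominators are nonzero, which holds since $H/v_i$ is finite.
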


\noindent
Diagrammatically, we represent an element of $\mcH$ as:
\[
\begin{tikzcd}
&&0\arrow[d]&&\\
&&\Z_p\arrow[d,"v_2"']&&\\
0\arrow[r]&\Z_p\arrow[r,"v_1"]&H \arrow[r] \arrow[d]&H/v_1\arrow[r]&0\\
&&H/v_2\arrow[d]&&\\
&&0&&\\
\end{tikzcd}
\]

\subsection{Correspondence between $\mcH$ and (directed) edges of $\Gamma$}
\label{sec: correspondence g and h}
Our aim in this section is to show that isomorphism classes in $\mcH$ parametrized the directed edges of $\Gamma$. Recall that the edges of $\Gamma$ are parametrized by commutative diagrams of the form:
\begin{equation}
\label{eqn: commutative diagram one}
 \begin{tikzcd}
  & \Z_p\arrow[dl,"g"'] \arrow[dr,"q"]&       \\
G \arrow[dr,"<g\comma \sdot>"']&          &\Q_p \arrow[dl,"1"]     \\
           &\Q_p/\Z_p &                    \\
\end{tikzcd}
\end{equation}
Thus, we need to establish a bijection between isomorphism classes of diagrams of the form, and elements of $\mcH$; we will in fact see that this bijection will give isomorphisms between groupoids of isomorphic elements.
\subsubsection{First direction}
From the diagram (\ref{eqn: commutative diagram one}), we can obtain the diagram below, in a natural way:
\begin{equation}
\label{eqn: intermediate diagram}
 \begin{tikzcd}
 &&0\arrow[d]&&\\
 & & \Z_p\arrow[d, "v_2"'  ]&     &  \\
0 \arrow[r]& \Z_p \arrow[r, "v_1"] &    ker\left(
\begin{array}{c}
G \times \Q_p\\
\downarrow\\
\Q_p/\Z_p\\
\end{array}
\right) \arrow[r]&G \arrow[r]& 0\\
\end{tikzcd}
\end{equation}
\noindent
where 
\begin{equation} 
\label{eqn: vone and vtwo def} v_2=(g,q) \text{ and } v_1=(0,1). \end{equation} Via the definition of the process as formulated in \S \ref{sec: alternative definition}, we recall that the cokernel of the homomorphism $v_2$ carries a natural perfect pairing. To verify that we obtain an element of $\mcH$, it remains to verify the compatibility condition (\ref{eqn: relation}).

\paragraph{Proof of compatibility condition.} We note that in order for the diagram (\ref{eqn: commutative diagram one}) to be self-dual, we need the pairing on $\Q_p$ to be $(q_1,q_2) \rightarrow \frac{q_1 q_2}{q} \mod \Z_p$, So the pairing is given by $\frac{1}{q}$. When we pass from commutative diagrams to sequences of the form 4.2, we need to twist the map $\Q_p \rightarrow \Q_p/\Z_p$ by $-1$; hence we also need to twist the pairing by $-1$ to preserve self-duality. Note that the difference of the pairings:
\[
<\sdot, \sdot>_2 - <\sdot, \sdot>_1
\]
is precisely the pairing on the $\Q_p$ factor, and is hence given by $-\frac{1}{q} \mod \Z_p$. By (\ref{eqn: vone and vtwo def}), we can deduce that the compatibility condition is indeed satisfied.

\subsubsection{Other direction}
From the diagram,
\begin{equation*}
\begin{tikzcd}
&&0\arrow[d]&&\\
&&\Z_p\arrow[d,"v_2"']&&\\
0\arrow[r]&\Z_p\arrow[r,"v_1"]&H \arrow[r] \arrow[d]&H/v_1\arrow[r]&0\\
&&H/v_2\arrow[d]&&\\
&&0&&\\
\end{tikzcd}
\end{equation*}
we get the following diagram, in which both compositions are $0$:
\begin{equation}
\label{eqn: vone and vtwo}
\begin{tikzcd}
 &\Z_p \arrow[d,"v_2"']&   \\
 \Z_p \arrow[r,"v_1"]&(H/v_1)  \times \Q_p \arrow[r]&\Q_p/\Z_p   \\
\end{tikzcd}
\end{equation}

\noindent
The map on the right is given by: \[
(h,a) \longmapsto\left( 
\,
\frac{h \otimes \Q_p}{v_1 \otimes \Q_p}
\,
,
\,
-a \,
\right)
\mod \Z_p
\]
The compatibility condition applied to $(v_2, \cdot)$ shows that the following pair of maps are dual to one another:
\[
\begin{tikzcd}
 \Z_p \arrow[d,"v_2"']&   \\
H/v_1 \arrow[r]&\Q_p/\Z_p   \\
\end{tikzcd}
\]
Therefore, the top and the rightmost homomorphisms of $\ref{eqn: vone and vtwo}$ give rise to the commutative diagram:
\[
\begin{tikzcd}
  &\Z_p \arrow[dr] \arrow[dl,"v_2"']&  \\
  H/v_1\arrow[dr," <\,v_2 \, \comma \, \sdot \,>"']&&\Q_p\arrow[dl]  \\
  &\Q_p/\Z_p&  \\
\end{tikzcd}
\]


\subsection{Further properties of $\mcH$}

\subsubsection{Endpoints of edge parametrized by an element of $\mcH$}
 There are two natural maps $\pi_i: \mcH \rightarrow \mcG$, given by modding out by $v_i$. One can verify that these two maps are the end-points of the edge corresponding to $\mcH$.

\subsubsection{Ratio of $\#(H/v_1)$ and $\#(H/v_2)$}
\paragraph{Proposition}
We have the identity:
\[
\frac{\#(H/v_1)}{\#(H/v_2)}=\left| \frac{v_2\otimes \Q}{v_1 \otimes \Q} \right|_p
\]
\begin{proof} First of all, the preceding holds, if we assume that $v_1 = av_2$ for some $a \in \Z_p$. In that case, on the left, we get $\#(\Z_p/a\Z_p)$ and on the right we get $\left| a^{-1}\right|_p$, which are equal. 
\somespace
More generally, we use the following identity.
\[
\frac{\#(H/v_1)}{\#(H/v_2)}=p^{-N}\frac{\#(H/p^N v_1)}{\#(H/v_2)}
\]
For $N$ large enough, $p^N v_1$ lies in the span of $v_2$. Therefore, there exists $a \in \Z_p$ such that $p^N v_1=av_2$, and the result follows as before. 
\end{proof}

\paragraph{Corollary}

\begin{equation}
\label{eqn: important identity one}
\frac{\#(H/v_1)}{\#(H/v_2)}=\left| q \right|_p
\end{equation}

\subsubsection{An isomorphism of automorphism groups}

%

\paragraph{Proposition}
Under the correspondence of \S \ref{sec: correspondence g and h}, \begin{equation}
\label{eqn: important identity two}
Aut(H,<,>_1,<,>_2,v_1,v_2) \cong Aut(G,<,>,g).\end{equation}
If an isomorphism $H/v_1 \cong G$ is given, then the isomorphism is canonical.

\begin{proof}
Pick an isomorphism $g: H/v_1 \cong G$. Note that the map $\Z_p \xrightarrow{v_1} H$ induces a homomorphism $q:H\rightarrow \Q_p$, by tensoring with $\Q_p$ and normalizing so that the image of $v_1$ is $1 \in \Q_p$. Every element $h \in H$ is uniquely determined by the pair $(g(h),q(h))$. $g(h)$ and $q(h)$ must satisfy the compatibility condition:
\[
\phi(g)\equiv q \mod \Z_p
\]
Conversely, any pair of elements $(g,q)$ satisfying this condition gives an element of $H$. Let $\tau \in Aut(H,v_1)$. Then
\begin{itemize}
\item $q(\tau(h))=h$
\item $g(\tau (h))=(\tau g)(h)$
\item $\phi(g(\tau(h)))\equiv q(\tau(h))\equiv q(h)\equiv\phi(g(h))$
\end{itemize}
This shows the following lemma:
\paragraph{Lemma} $Aut(H,v_1) \cong Aut(G,\phi)$
Under this isomorphism, we have 
\begin{itemize}
\item Elements of $Aut(H,v_1)$ that preserve a given element $w \in H$ correspond to elements of $Aut(G,\phi)$ that preserve the image of $w$ in $G$.
\item Elements of $Aut(H,v_1)$ that preserve the pairing $<,>_1$ are precisely those elements of $Aut(G,\phi)$ that preserve $<,>$.
\end{itemize}

The preceding discussion implies that elements of $Aut(H,v_1,v_2,<,>_1)$ are mapped to elements of $Aut(G,\phi)$ that preserve the image of $v_2$ in $G$, and the pairing $<,>$ on $G$. Observing that, by construction $\phi = <g,\sdot>$, we find that an element of $Aut(G,\phi)$ is in the image of $Aut(H,v_1,v_2,<,>_1)$, if and only if it is contained in $Aut(G,g,<,>)$. Hence,
\begin{equation}    
\label{eqn: isomorphismtwo}
Aut(H,v_1,v_2,<,>_1) \cong Aut(G,g,<,>)
\end{equation}
\paragraph{Lemma}
\begin{equation}
\label{eqn: isomorphismone}
Aut(H,v_1,v_2,<,>_1)=Aut(H,v_1,v_2,<,>_1,<,>_2)
\end{equation}
This lemma from the identity, \ref{eqn: relation}, relating $<,>_1$ and $<,>_2$.
\newline
\newline
\noindent
Combining (\ref{eqn: isomorphismone}) and (\ref{eqn: isomorphismtwo}) , we get  
\[
Aut(H,v_1,v_2,<,>_1, <,>_2) \cong Aut(G,g,<,>)
\]
\end{proof}

\subsection{An involution $\sigma$ on isomorphism classes in $\mcH$}
\label{sec: involution}

Isomorphism classes in $\mcH$ carry a natural involution given by
\[
v_1 \mapsto v_2 \hspace{0.5in} v_2 \mapsto -v_1 \hspace{0.5in}
<\,,\,>_1 \, \mapsto \, <\,,\,>_2 \hspace{0.5in}
<\,,\,>_2 \, \mapsto \, <\,,\,>_1 \hspace{0.5in}
\]
We note that $\sigma$ preserves the compatibility conditions on $v_1,v_2,<,>_1,<,>_2$, hence we indeed a map $\mcH \rightarrow \mcH$. Moreover, if we apply $\sigma$ twice, we get \[(H,-v_1,-v_2,<,>_1,<,>_2),\] which lies in the same isomorphism class as \[(H,v_1,v_2,<,>_1,<,>_2).\] Thus we indeed get an involution.
\somespace
Moreover, we note that the group $Aut(H,v_1,v_2,<,>_1, <,>_2) \subset Aut(H)$ is preserved by $\sigma$.
\somespace
We also note that $\pi_1 \circ \sigma = \pi_2$, $\pi_2 \circ \sigma = \pi_1$. Hence, if we interpret $\mcH$ as directed edges, then $\sigma$ reverses endpoints.
\somespace
Finally, we make the following important observation:
\begin{equation}
\label{eqn: important property three}
\sigma\text{ takes }q \text{ to }\frac{-1}{q}.
\end{equation}

\subsection{A choice of $\mu$ such that $\sigma$ is measure-preserving.}

First, of all, we can rewrite the weight of the edge corresponding to \[(H,v_1,v_2,<,>_1, <,>_2).\]
Using \ref{eqn: edgeweight}, as well as \ref{eqn: important identity one}, \ref{eqn: important identity two} and the isomorphism $G \cong H/v_1$, we find that the weight is given by:
\[
\mu(G,<,>)\frac{\#Aut(G,<,>)}{\#G \#Aut(G,<,>,g)} |dq|=\]
\[
=\frac{\mu(G,<,>)\#Aut(G,<,>)}{\#(H/v_1) \#Aut(H,v_1,v_2,<,>_1, <,>_2)} |dq|=
\]
\begin{equation}
\label{eqn: final expression}
=\frac{\mu(G,<,>)\#G\#Aut(G,<,>)}{\#(H/v_1) \#(H/v_2) \#Aut(H,v_1,v_2,<,>_1, <,>_2)} \frac{|dq|}{|q|}
\end{equation}
Now note that the term $\frac{|dq|}{|q|}$ is invariant under the transformation $q \rightarrow -\frac{1}{q}$; hence, by (\ref{eqn: important property three}), it is invariant by the action of $\sigma$. Furthermore, the denominator of (\ref{eqn: final expression}) is invariant under $\sigma$.
\somespace
Hence, only the numerator $\mu(G,<,>)\#G\#Aut(G,<,>)$ is not necessarily invariant under $\sigma$. If we choose 
\[
\mu(G,<,>) \propto \frac{1}{\#G\#Aut(G,<,>)}
\]
then the numerator of \ref{eqn: final expression} will be constant and \ref{eqn: final expression}  will be invariant under the involution $\sigma$. 
\somespace
If we identify directed edges by this involution, we get an undirected weighted graph. 
\somespace
It follows that the Markov chain is reversible with respect to the measure $\mu$. Indeed a symmetric random walk on a weighted graph is reversible with respect to the measure given by taking the sum of the weights of edges adjacent to a vertex. But this is precisely $\mu$.



\subsection*{Acknowledgements}

The author thanks Manjul Bhargava for suggesting the question that led to the results described in this paper. Secondly, the author thanks Yakov G. Sinai for introducing the author to renormalization group methods in probability; these ideas inspired the Markov chain approach pursued here. The author thanks Yifeng Huang, Hoi Nguyen, Alexander Van Werde and Shiqiao Zhang for fruitful discussions related to symmetric matrices and Markov chain methods. The author thanks Roger Van Peski for discussions related to \cite{shen}. The author thanks Xi Sisi Shen for encouragement during the uncertain early stages of this research project. Finally, the author thanks Alexander Yu for his encouragement and feedback throughout the last decade of the author's mathematical journey.
\newline
\newline
\noindent
This is an elaboration of the author's thesis work. AI was not used in this project.

\bibliographystyle{alpha}
\bibliography{ThesisBibliographyPrivetPrivet.bib}

\end{document}